\documentclass[12pt]{amsart}
\usepackage{amssymb}
\usepackage{amsmath,amscd}
\usepackage{amsfonts,latexsym,verbatim,amscd,mathrsfs,color,array}
\usepackage[all,cmtip]{xy}
\usepackage{mathrsfs}
\usepackage{multirow}
\usepackage{graphicx}
\usepackage{amsfonts, amsthm}
\usepackage{bbm}
\usepackage[hmargin=1in,vmargin=1in]{geometry}
\usepackage{mathdots}
\usepackage{stmaryrd}
\usepackage{MnSymbol}
\usepackage{bbm}
\usepackage[hidelinks]{hyperref}
\usepackage{enumitem}
\usepackage[all]{xy}
\usepackage{tikz-cd}

\allowdisplaybreaks

\def\XXint#1#2#3{{\setbox0=\hbox{$#1{#2#3}{\int}$ }
		\vcenter{\hbox{$#2#3$ }}\kern-.6\wd0}}

\newcommand{\Tr}{\operatorname{Tr}}
\newcommand{\transint}{\cap\kern-0.63em|\kern0.7em}

\DeclareMathSymbol{\intprod}{\mathbin}{MnSyC}{'270}

\newcommand{\LB}{\left[}

\newcommand{\p}{{ \partial}}

\newcommand{\Z}{{\mathbb Z}}
\newcommand{\N}{{\mathbb N}}

\newcommand{\R}{{\mathbb R}}

\newcommand{\SU}{{\mathrm{SU} }}

\newcommand{\SO}{{\mathrm{SO} }}
\newcommand{\YM}{{\mathcal{YM} }}

\newcommand{\dist}{{\operatorname{dist}}}

\renewcommand{\p}{{\partial}}
\newcommand{\eps}{{\varepsilon}}

\newtheorem{thm}{Theorem}[section]
\newtheorem{lemma}[thm]{Lemma}
\newtheorem*{lemma*}{Lemma}

\newtheorem{cor}[thm]{Corollary}

\newtheorem*{conj*}{Conjecture}

   \newtheoremstyle{others}% name
     {3pt}%      Space above
     {2pt}%      Space below
     {}%         Body font
     {}%         Indent amount (empty = no indent, \parindent = para indent)
     {\bf}% Thm head font
     {.}%        Punctuation after thm head
     {.5em}%     Space after thm head: " " = normal interword space;
     {}%         Thm head spec (can be left empty, meaning `normal')

\theoremstyle{others}
\newtheorem{rmk}[thm]{Remark}
\newtheorem*{rmk*}{Remark}
\newtheorem{defn}[thm]{Definition}

\newtheorem*{question*}{Question}

\numberwithin{equation}{section}

\usepackage{mathtools} 
\usepackage{faktor}
\usepackage{dynkin-diagrams}
\usepackage{mathrsfs}
\usepackage{hyperref}
\usepackage{tikz-cd}
\usepackage{bbm}

\usepackage{romanbar}

\begin{document}

\title[Convergence of Yang-Mills flow]{Convergence of the Yang-Mills flow on ALE gravitational instantons}
\author{Anuk Dayaprema}
\address{University of Wisconsin, Madison}
\email{dayaprema@wisc.edu}
\author{Alex Waldron}
\address{University of Wisconsin, Madison}
\email{waldron@math.wisc.edu}
\date{\today}

\begin{abstract}
    We prove a sharp convergence theorem for the Yang-Mills flow on an $\SU(r)$-bundle over a locally hyperK\"ahler ALE 4-manifold. % under a sharp hypothesis on the self-dual curvature of the initial connection.
    Our main result is a noncompact version of the ``parabolic gap theorem'' \cite[Theorem 1.1]{gappaper} previously established by the authors.
\end{abstract}

\maketitle

\tableofcontents

\thispagestyle{empty}

\section{Introduction and statement of main result}

 A time-dependent family of connections $A(t)$ on a vector bundle $E$ over a Riemannian manifold $(M,g)$ is said to evolve by \emph{Yang-Mills flow} if
\begin{equation}\label{ymf}
\frac{\p A}{\p t} = - D_{A(t)}^*F_{A(t)}.
\end{equation}
Here $F_{A(t)}$ is the curvature of $A(t)$ and $D_{A(t)}^*$ is the formal adjoint of the covariant derivative. 
The semi-parabolic equation (\ref{ymf}) is the downward gradient-flow of the \emph{Yang-Mills functional}
$$\mathcal{YM}(A) = \frac12 \int_{M} |F_A|_g^2 \, dV_g,$$
which originated in physics and has become a fundamental object of study in differential geometry.

The goal of this paper is to extend the authors' recent ``parabolic gap theorem'' \cite[Theorem 1.1]{gappaper}, valid over the 4-sphere, to a certain well-known family of noncompact spaces. Recall that an \emph{elliptic} gap theorem states that any Yang-Mills connection with energy below a certain value is an instanton, i.e., has self-dual or anti-self-dual curvature and is a minimizer of $\YM(\cdot)$ on the given bundle (see \cite[\S 1]{gappaper} for a brief survey). A \emph{parabolic} gap theorem states that any initial connection with energy below a certain value is deformed smoothly by (\ref{ymf}) to an instanton. The latter is a stronger and more natural statement in that it applies to general connections rather than only to solutions of the Yang-Mills equation. % \emph{a priori.} %This stronger statement can potentially be used to study the topology of instanton moduli spaces; see 

\begin{comment}
    The well-known noncompact spaces in question are ALE hyperK\"ahler 4-manifolds, a.k.a. \emph{gravitational instantons}. These are complete Ricci-flat 4-manifolds with vanishing self-dual Weyl tensor and Euclidean volume growth at infinity, having a single end modeled on the space $\left( \R^4 \setminus \{0\} \right) / \Gamma,$ where $\Gamma \subset \SU(2)$ is a finite subgroup. Kronheimer \cite{kronheimerconstruction} was able to classify these spaces completely, identifying each one with a canonical hyperK\"ahler structure on a resolution of the orbifold $\R^4 / \Gamma.$
\end{comment}
The well-known noncompact spaces in question are ALE locally hyperK\"ahler 4-manifolds. These are complete, Ricci-flat 4-manifolds with vanishing self-dual Weyl tensor and maximal volume growth at infinity, having a single end modeled on the space $\left( \R^4 \setminus \{0\} \right) / \Gamma,$ where $\Gamma \subset \SO(4)$ is a finite subgroup. In the simply-connected case, 
Kronheimer \cite{kronheimerconstruction, kronheimertorelli} gave a complete construction of all such $M,$ identifying each one with a canonical hyperK\"ahler structure on a resolution of the orbifold $\R^4 / \Gamma.$ Wright \cite{wrightquotientsofgravitons} later completed the classification by determining the possible finite quotients of the Kronheimer examples.
%Kronheimer \cite{kronheimerconstruction} was able to classify these spaces completely, identifying each one with a canonical hyperK\"ahler structure on a resolution of the orbifold $\R^4 / \Gamma.$ %For this reason we will alternately refer to an ALE hyperK\"ahler 4-manifold as a ``Kronheimer ALE space.''
Note that a classification of complete 4D hyperK\"ahler manifolds with less-than-maximal volume growth has also been achieved recently \cite{chenchengravitationalI,chenchengravitationalII,chenchengravitationalIII,sunzhangcollapsing}, but in the present work we focus on the ALE case.

%\subsection{Motivation}
We have multiple motivations for proving parabolic gap theorems on gravitational instantons.

\begin{enumerate}[label=\arabic*., wide, labelwidth=!, labelindent=15pt]
    \item 
    %Even in the case of Euclidean 4-space, 
    The convergence problem for (\ref{ymf}) on a complete noncompact space presents new analytic challenges as compared to the compact case. Indeed, the work of Gustaffson, Nakanishi, and Tsai \cite{gustafsonnakanishitsai} (see also Sire, Wei, Zheng, and Zhou \cite{sireweiglobalosc}) demonstrates that, strictly speaking, the parabolic gap \emph{fails} on $\R^4$; there exist smooth $\SO(4)$-equivariant initial connections $H^1$-close to the standard instanton that fail to converge at infinite time under (\ref{ymf}). We will see that this problem can be avoided by imposing a mild additional decay hypothesis on the self-dual curvature of the initial connection.

\item 
A parabolic gap theorem on $\R^4$ has already been established by Oh and Tataru \cite[Theorem 2.11]{ohtatarucaloric} for connections with Yang-Mills energy less than $8\pi^2$ and zero topological charge (see Definition \ref{defn:topologicalcharge}).
Their result is used to construct a ``caloric gauge,'' an essential step in the proof of the Threshold Conjecture for the hyperbolic Yang-Mills equations \cite{ohtatarucaloric, ohtataruhyperboliccaloric, ohtataruarbitrarytopologicalclass, ohtataruthreshold}. Our main result generalizes that of Oh and Tataru to higher topological charge. %, by requiring a smallness assumption only on the self-dual part of the curvature.
In particular, our conclusion is that the flow (\ref{ymf}) converges smoothly to an ASD instanton of the same charge as that of the initial connection, rather than to the flat connection. We expect that this will open the way to generalizations of Oh-Tataru's work on the Threshold Conjecture. % on the hyperbolic Yang-Mills equation.

\item
The parabolic gap question for (\ref{ymf}) on a bundle over a gravitational instanton is closely analogous to the question of stability of the underlying space under Ricci flow, studied by \cite{DeruelleKroncke20,DeruelleOzuch20,KronckePetersen20,stolarskiwaldron}. In the case of (\ref{ymf}), we are able to obtain convergence under a vastly weaker hypothesis on the initial data. In particular, we do not need to assume closeness to any ASD instanton. %In the future it may be possible to obtain stability results for the Ricci flow on an ALE hyperK\"ahler background under similarly weakened hypotheses.

\item
Last, we expect that our results will be useful in the future for studying the topology of moduli spaces of ASD instantons on ALE gravitational instantons \cite{kronheimernakajima, nakajima90}. %in the manner of Taubes's work on ADHM moduli spaces (see \cite{taubespathconnected} and \cite[\S 3.3]{parabolicgap}). %While the celebrated work of Nakajima \cite{} and Kronheimer-Nakajima \cite{} constructs all ASD instantons on ALE instantons in terms of matrix data, the topology of these moduli spaces is not fully understood.

\end{enumerate}

%\item We shall see that the analytic requirements for our proof are provided by the geometry of the gravitational instanton in a rather beautiful way.

%\end{itemize}

%\subsection{Statement of results}
The following summarizes our main results:

\begin{thm}\label{thm:main}
    Let $(M, g)$ be an ALE gravitational instanton asymptotic to $\left( \R^4 \setminus \{0\} \right) / \Gamma,$ %with group $\Gamma$ at infinity {\color{red} fix terminology},
    oriented such that the self-dual Weyl curvature vanishes. Let $E \to M$ be an $\SU(r)$-bundle and $A_0$ a connection on $E$ with $\YM(A_0) < \infty$ and
        \begin{equation}\label{main:gap}
        \|F^+_{A_0} \|_{L^2(M)}^2 < \frac{8 \pi^2}{|\Gamma|}.
    \end{equation}
    Let $A(t),$ $t \in \LB 0, \infty \right),$ be any finite-energy classical solution of the Yang-Mills flow with $A(0) = A_0.$ Denote the self-dual part of the curvature of $A(t)$ by $F^+(t).$ %uniformly bounded energy. % such that $\mathcal{YM}(A(t))$ is locally bounded in $t.$

\vspace{2mm}

\noindent    (a) We have
    \begin{align*}
        \|F^+(t)\|_{L^\infty(M)} = o \left( \frac{1}{t} \right)
    \end{align*}
    and
        \begin{align*}
        \|F^+(t)\|_{L^2(M)} \to 0
    \end{align*}
        as $t \to \infty.$
    %Also, $\mathcal{YM}(A(t))$ is a non-increasing function of $t.$

    \vspace{2mm}

\noindent    (b) Suppose further that $|F^+_{A_0}| \in L^p(M)$ for some $1 < p < 2.$ 
We then have %$|F^+_{A(t)}| \in L^p(M)$ for all $t,$
    \begin{align*}
        \|F^+(t)\|_{L^\infty(M)} = o \left( \frac{1}{t^{\frac{2}{p}}} \right),
    \end{align*}
    and
    \begin{align*}
        \|F^+(t)\|_{L^p(M)} \to 0 %= O\left(\frac{1}{t^{\frac{2}{p}-1}}\right)
    \end{align*}
    as $t \to \infty.$

    \vspace{2mm}

    \noindent (c) Under the above assumptions,  %as $t \to \infty,$ $|F^+(t)|$ converges to zero in $L^p(M)$ and 
$A(t)$ converges in $C^\infty_{loc} \cap L^q$ for each $q > \frac{2p}{2-p}$ to an ASD instanton $A_\infty$ on $E$ with $\kappa(A_\infty) = \kappa(A_0).$ %No discussion of framing at infinity etc.?
\end{thm}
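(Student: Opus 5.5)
The plan is to follow the structure of the parabolic gap theorem on $S^4$ from \cite{gappaper}, with the compactness of the base replaced by ALE analysis. The main tool is the Weitzenb\"ock formula for the self-dual curvature along the flow. Because $g$ is Ricci-flat with $W^+ = 0$, the curvature terms in the Bochner formula for $\Lambda^+$-valued forms drop out. One obtains
\begin{equation*}
\left(\frac{\p}{\p t} + \nabla^*\nabla\right) F^+ = [F^+ \# F^+],
\end{equation*}
and hence a subsolution inequality $\left(\p_t - \Delta\right)|F^+| \le c\,|F^+|^2$. The $F^-$ terms cancel because the quadratic term only involves $F^+$, as in \cite{gappaper}. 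So $|F^+|$ behaves like a subsolution of a semilinear heat equation with critical nonlinearity in dimension four.

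The gap enters through an $\epsilon$-regularity or monotonicity argument. The key fact is that the sharp constant in the Sobolev-type inequality $\|u\|_{L^4}^2 \lesssim \|\nabla u\|_{L^2}^2$ for self-dual forms is governed by the $\R^4/\Gamma$ asymptotics. The best constant of the $L^2$--Sobolev inequality on an ALE space with group $\Gamma$ degrades by the factor $|\Gamma|^{1/2}$. I would prove this via the volume growth and a Bishop--Gromov/isoperimetric argument, or by comparing with the orbifold cone.

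Combining this Sobolev inequality with the energy identity
\begin{equation*}
\frac{d}{dt}\|F^+\|_{L^2}^2 = -2\|\nabla F^+\|^2 + c\int\langle[F^+,F^+],F^+\rangle
\end{equation*}
shows that $\|F^+\|_{L^2}^2 < 8\pi^2/|\Gamma|$ makes $\|F^+\|_{L^2}^2$ nonincreasing. It should also show $\int_0^\infty \|\nabla F^+\|_{L^2}^2\,dt < \infty$. I expect this sharp-constant step to be the main obstacle: the threshold must match exactly, presumably via a Kato-type refined inequality for the curvature $\nabla F^+$ (harmonic self-dual forms) and the cone constant $8\pi^2/|\Gamma|$.

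For (a), I would first get $\|F^+\|_{L^2}\to 0$ from Sobolev, integrability of $\|\nabla F^+\|_{L^2}^2$, and monotonicity of $\|F^+\|_{L^2}$. Then Moser iteration for the subsolution, on parabolic balls of scale $\sqrt t$ (using the heat kernel bounds $t^{-2}$ on ALE spaces, available from maximal volume growth and $\mathrm{Ric}=0$), gives $\|F^+(t)\|_{L^\infty}\lesssim t^{-1}\|F^+(t/2)\|_{L^2}=o(1/t)$.

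For (b), I would propagate the $L^p$ norm. Once $\|F^+\|_{L^\infty}$ is small in a scale-invariant sense (small $\|F^+\|_{L^2}$ suffices via a Duhamel/absorption argument), the subsolution is dominated by the linear heat flow. The $L^p\to L^\infty$ heat kernel bound $t^{-2/p}$ then gives the claimed rate. Small-time $L^p$ control comes from the same Duhamel estimate, and $\|F^+\|_{L^p}\to 0$ follows by density of compactly supported data.

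For (c), I would write $\p_t A = -D^*F = -2D^*F^+$, since $D^*F^- = D^*F^+ \pm$ Bianchi terms give $D^*F = 2D^*F^+$ up to sign. Then $|\p_t A| \lesssim |\nabla F^+|$. Parabolic smoothing gives $\|\nabla F^+\|_{L^q}\lesssim t^{-1/2}\|F^+(t/2)\|_{L^q}$, and interpolating between $L^p$ and $L^\infty$ gives $\|F^+\|_{L^q}\lesssim t^{-\frac2p+\frac2q}$. The integral $\int^\infty t^{-\frac12-\frac2p+\frac2q}\,dt$ converges precisely when $q>\frac{2p}{2-p}$. Hence $A(t)$ is Cauchy in $L^q$, and local smooth convergence follows from derivative bounds.

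The limit is ASD because $F^+\to0$. The charge is preserved because $\kappa = \frac{1}{8\pi^2}\left(\|F^-\|^2-\|F^+\|^2\right)$ is constant along the flow, provided there is no energy loss at infinity. Energy loss is excluded by the $L^q$ convergence and the decay of $F^-$, using Chern--Simons boundary terms on large spheres $S^3/\Gamma$.
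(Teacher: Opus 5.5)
Your outline has a genuine gap at exactly the step you flag as the main obstacle, and the rest of the argument depends on it.

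\textbf{The sharp threshold.} First, the gap hypothesis is not what makes $\|F^+\|_2^2$ nonincreasing. By Lemma~\ref{lemma:energyidentity}, $\frac{d}{dt}\|F^+\|_2^2=-\|D^*F\|_2^2$ for every finite-energy solution. What the threshold must exclude is concentration (bubbling) of self-dual curvature, and your absorption scheme cannot do this at $8\pi^2/|\Gamma|$. Integrating the Weitzenb\"ock identity gives $\frac12\frac{d}{dt}\|F^+\|_2^2+\|\nabla F^+\|_2^2\le\frac{4}{\sqrt3}\|F^+\|_2\|F^+\|_4^2$. Along the flow $D^*F^+=\frac12D^*F\neq0$. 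So the refined Kato inequality $\frac32|\nabla|F^+||^2\le|\nabla F^+|^2$ is unavailable: it requires $D_AF^+=D_A^*F^+=0$, and it is what makes the elliptic argument sharp. With only $|\nabla|F^+||\le|\nabla F^+|$ and the sharp constant $\frac{(6|\Gamma|)^{1/2}}{8\pi}$, your absorption closes only for $\|F^+\|_2^2<2\pi^2/|\Gamma|$, a factor of four short.

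The missing idea is a blow-up reduction to the \emph{elliptic} gap theorem (Theorem~\ref{thm:supbound}). Suppose $t\|F^+(t)\|_\infty$ is unbounded. Since $\int_0^\infty\|D^*F\|_2^2\,dt\le\|F^+(0)\|_2^2$, pigeonholing produces time windows, of length comparable to the square of the concentration scale, on which the scale-invariant tension tends to zero. Rescale so that $\|F^+\|_\infty\sim1$ and pass to an Uhlenbeck limit. The limit is Yang--Mills on $M$, on $\R^4/\Gamma$ (when the concentration scale tends to infinity), or on $\R^4$. It carries nonzero self-dual energy yet has $\|F^+\|_2^2<8\pi^2/|\Gamma|$. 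This contradicts Theorem~\ref{thm:gap}, which applies because the refined Kato inequality holds for Yang--Mills limits and $M$ has the Sobolev constant of $\R^4/\Gamma$. The resulting bound $\|F^+(t)\|_\infty\le C/t$ is what the paper then uses to prove $\|F^+\|_2\to0$, and also for the small-time $L^p$ control in (b).

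\textbf{Two further steps fail as written.}

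\emph{Part (a).} Monotonicity, $\int_0^\infty\|\nabla F^+\|_2^2\,dt<\infty$ and the Sobolev inequality do not force $\|F^+\|_2\to0$ on a noncompact space. For example, the dilating family $(1+t)^{-2}f(x/(1+t))$ on $\R^4$ has constant $L^2$ norm and finite $\int_0^\infty\|\nabla u\|_2^2\,dt$. The paper instead localizes (Lemma~\ref{lemma:energygoestozero}). It uses three inputs:
\begin{itemize}
\item the $C/t$ bound on a large ball;
\item an annulus, chosen by pigeonholing, carrying little total tension, which controls the outward flux of self-dual energy;
\item the small tail of $F^+(0)$.
\end{itemize}

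\emph{Part (c).} You never control $F^-$. Derivative estimates and $C^\infty_{loc}$ convergence need a uniform bound on the full curvature, i.e.\ no ASD bubbling at infinite time. The paper derives this from $\|F^+\|_\infty\in L^1([1,\infty))$ via \cite[Theorem 2.5]{instantons}. The same result, adapted to complements of balls, gives $F(t)\to F_{A_\infty}$ in $L^2$, and hence $\kappa(A_\infty)=\kappa(A_0)$.

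Since $F$ is merely bounded rather than $O(1/t)$, your smoothing estimate $\|\nabla F^+\|_q\lesssim t^{-1/2}\|F^+(t/2)\|_q$ at scale $\sqrt t$ is unjustified. Scale-one estimates give the integrand $t^{-\frac2p+\frac2q}$, which yields exactly $q>\frac{2p}{2-p}$. Your own integral $\int^\infty t^{-\frac12-\frac2p+\frac2q}\,dt$ converges iff $q>\frac{4p}{4-p}$, so that exponent count was also off.
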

%We will give more precise statements of each of these estimates below (see \ref{}, \ref{}, and \ref{}).

Note that %finite-energy solutions of (\ref{ymf}) do not encounter finite-time singularities---
in view of \cite{lte} or \cite[Theorem 1.1]{gappaper}, assuming a reasonable short-time existence theory on the given 4-manifold, we can always solve (\ref{ymf}) classically on $\LB 0, \infty \right).$ %with any finite-energy initial connection $A_0.$ 
%Solutions can therefore be extended for all time assuming a reasonable short-time existence theory on the given manifold. %Although Short-time existence and well-posedness for (\ref{ymf}) on complete manifolds is still lacking in the literature, and we leave this for future work. %leave this question for elsewhere.
%Such a theory for (\ref{ymf}) on complete manifolds is still lacking in the literature; 
A detailed treatment of short-time existence and well-posedness for \eqref{ymf} on complete manifolds is still lacking in the literature, and we plan to address this question separately. %; we
%we leave this question for elsewhere. 
%leave this for a separate paper.

%\subsection{Related work on Yang-Mills flow on noncompact manifolds}

\subsection*{Acknowledgements} The authors thank Claude LeBrun for pointing them to the work of Viaclovsky \cite{viaclovskyorbifoldyamabe}, which is crucial for the present paper.

\vspace{10mm}

\section{Sobolev constant and elliptic gap theorem}

\begin{comment}
    Let $(M,g)$ be a gravitational instanton, i.e. a complete hyperK\"ahler 4-manifold, with Euclidean volume growth. The Ricci tensor and the self-dual Weyl curvature both vanish. According to Bando-Kasue-Nakajima \cite{bandokasuenakajima}, $(M,g)$ is in fact ALE of order $4,$ i.e. there exists a compact subset $\Sigma \subset M,$ a finite subgroup $\Gamma \subset \SU(2),$ and diffeomorphism $\varphi : (\R^4 \setminus \{0\})/\Gamma \to M \setminus \Sigma$ such that for each integer $k \geq 0,$ we have as $r := \operatorname{dist}_{g_{\operatorname{Euc}}} (x, 0) \to \infty,$
    \begin{align*}
        \left|\nabla^{\operatorname{Euc}, k}(\varphi^\ast(g) - g_{\operatorname{Euc}})\right|_{g_{\operatorname{Euc}}} = O(r^{-4-k}).
    \end{align*}  
    %and a  asymptotic to $\R^4 /\Gamma$ for $\Gamma \subset SU(2),$ and oriented such that the self-dual Weyl tensor vanishes. See \cite[Def. 7.2.1]{joycebook} for a precise description of the requisite asymptotic behavior of $g.$ \par
Kronheimer \cite{kronheimerconstruction, kronheimertorelli} gave a complete construction of all such $M,$ which among other things implies that $\Sigma$ can be taken to be a set of measure zero (we do not need this fact).
\end{comment}
Let $(M,g)$ be a complete, oriented, Ricci-flat, anti-self-dual, ALE 4-manifold. Note that we do not assume $M$ is globally hyperK{\"a}hler, or equivalently that $M$ is simply-connected. According to Bando-Kasue-Nakajima \cite{bandokasuenakajima}, $(M,g)$ is in fact ALE of order $4,$ i.e. there exists a compact subset $\Sigma \subset M,$ a finite subgroup $\Gamma \subset \SO(4),$ and diffeomorphism $\varphi : (\R^4 \setminus \{0\})/\Gamma \to M \setminus \Sigma$ such that for each integer $k \geq 0,$ we have as $r := \operatorname{dist}_{g_{\operatorname{Euc}}} (x, 0) \to \infty,$
    \begin{align*}
        \left|\nabla^{\operatorname{Euc}, k}(\varphi^\ast(g) - g_{\operatorname{Euc}})\right|_{g_{\operatorname{Euc}}} = O\left(r^{-4-k}\right).
    \end{align*}  
    %and a  asymptotic to $\R^4 /\Gamma$ for $\Gamma \subset SU(2),$ and oriented such that the self-dual Weyl tensor vanishes. See \cite[Def. 7.2.1]{joycebook} for a precise description of the requisite asymptotic behavior of $g.$ \par
Kronheimer's classification \cite{kronheimerconstruction, kronheimertorelli} implies that $\Sigma$ can be taken to be a set of measure zero, although we do not need this fact.

%\begin{thm}[Kronheimer]\label{thm:asymptoticchart}
%    Given $M,$ there exists a measure zero set $\Sigma$ and a diffeomorphism $\varphi : (\R^4 \setminus \{0\})/\Gamma \to M \setminus \Sigma$ such that for each integer $k \geq 0,$ we have as $r := \operatorname{dist}_{g_{\operatorname{Euc}}} (x, 0) \to \infty,$
%    \begin{align*}
%        \left|\nabla^{\operatorname{Euc}, k}(\varphi^\ast(g) - g_{\operatorname{Euc}})\right|_{g_{\operatorname{Euc}}} = O(r^{-4-k}).
%    \end{align*}
%\end{thm}

\begin{defn}\label{defn:topologicalcharge}
Let $A$ be a connection on a unitary bundle $E \to M$ with $L^2$ curvature. The \emph{topological charge} of $A$ is
$$\kappa(A) = \frac{1}{8\pi^2} \int_M \Tr F_A \wedge F_A.$$
Note that
$$\kappa(A) = \frac{1}{8\pi^2} \int_M \left( |F_A^-|^2 - |F_A^+|^2 \right) \, dV_g,$$
so that $\YM(A) = 4 \pi^2 |\kappa(A)|$ if and only if $F_A = F_A^-$ or $F_A = F_A^+$ is self-dual or anti-self-dual, i.e. an instanton.
\end{defn}

If $M$ is compact then $\kappa$ depends only on the topology of the bundle $E,$ and takes integer values (equal to the second Chern class) if $c_1(E) = 0$. If $M$ is noncompact then $\kappa$ is no longer independent of $A$ or integer-valued, but connections with prescribed asymptotics have the same value mod $\Z.$ For the case of ALE 4-manifolds, the value of $\kappa$ is equivalent mod $\Z$ to the Chern-Simons invariant of the induced flat connection over $S^3 / \Gamma.$ We shall not need these facts in the present paper.

The following Sobolev inequality, in the hyperK{\"a}hler case, %follows from work of 
is originally due to Viaclovsky \cite[Theorem 1.3]{viaclovskyorbifoldyamabe}; for $M$ not globally hyperK{\"a}hler, we may cite the general result of Balogh-Kristaly \cite[Theorem 1.2]{sharpsobolevnonnegativericci}), which holds for complete manifolds with nonnegative Ricci curvature and maximal volume growth.
\begin{thm}\label{thm:sobolevineq}
    For $f \in \dot{H}^1(M),$ we have
    \begin{equation}\label{sobolevineq:ineq}
        \|f\|_{L^4}^2 \leq \frac{(6|\Gamma|)^{\frac{1}{2}}}{8\pi}\|\nabla f \|_{L^2}^2.
    \end{equation}
\end{thm}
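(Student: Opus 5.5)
The plan is to deduce \eqref{sobolevineq:ineq} from the sharp Sobolev inequality on complete manifolds with nonnegative Ricci curvature and Euclidean volume growth. Only one geometric input is needed: the \emph{asymptotic volume ratio} of $(M,g)$.

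\textbf{Step 1: the asymptotic volume ratio.} Define
$$\theta := \lim_{R\to\infty} \frac{\operatorname{Vol}_g(B_R(x_0))}{\omega_4 R^4},$$
where $\omega_4 = \pi^2/2$ is the volume of the Euclidean unit $4$-ball. By Bishop--Gromov (using $\mathrm{Ric}\geq 0$) this limit exists, is independent of $x_0$, and satisfies $\theta\le 1$. I will compute $\theta$ using the ALE chart $\varphi$. Since $\varphi^*g - g_{\operatorname{Euc}} = O(r^{-4})$, the $g$-distance from a fixed basepoint differs from the Euclidean radius $r$ by $o(r)$. Also $\Sigma$ is compact, so it contributes $O(1)$ volume. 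Hence $\operatorname{Vol}_g(B_R) = \operatorname{Vol}_{\operatorname{Euc}}(B_R/\Gamma)(1+o(1)) = \omega_4 R^4/|\Gamma|\,(1+o(1))$. This uses the fact that $\Gamma$ acts freely on $\R^4\setminus\{0\}$, so the quotient has exactly $1/|\Gamma|$ of the volume. Therefore $\theta = 1/|\Gamma|$.

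\textbf{Step 2: invoke the sharp inequality.} Balogh--Krist\'aly \cite[Theorem 1.2]{sharpsobolevnonnegativericci} give, for $n$-manifolds with $\mathrm{Ric}\ge 0$ and $\theta>0$,
$$\|f\|_{L^{2^*}} \le \theta^{-1/n} K(n)\,\|\nabla f\|_{L^2}, \qquad f\in C^\infty_c(M).$$
Here $K(n)$ is the Aubin--Talenti constant, with
$$K(n)^2 = \frac{4}{n(n-2)\,|S^n|^{2/n}}.$$
Their proof goes via optimal transport / an ABP-type argument. For $n=4$ we have $|S^4| = 8\pi^2/3$, and so
$$K(4)^2 = \frac{4}{8}\Big(\frac{3}{8\pi^2}\Big)^{1/2} = \frac{\sqrt6}{8\pi}.$$
Squaring the inequality and inserting $\theta^{-2/4} = |\Gamma|^{1/2}$ gives exactly the constant $(6|\Gamma|)^{1/2}/(8\pi)$ for $f\in C_c^\infty(M)$. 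In the hyperK\"ahler case one can alternatively follow Viaclovsky. There one conformally compactifies $M$ to an orbifold with a single $\R^4/\Gamma$ point, bounds its Yamabe invariant by $Y(S^4)/|\Gamma|^{1/2}$ (the orbifold Aubin inequality), and uses scalar-flatness of $g$ so that the conformal Laplacian reduces to $-\Delta$.

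\textbf{Step 3: density.} Finally I will extend the inequality to $\dot H^1(M)$, which I take to be the completion of $C^\infty_c(M)$ in the norm $\|\nabla f\|_{L^2}$. The inequality itself shows that Cauchy sequences in $\dot H^1$ are Cauchy in $L^4$, so it passes to the limit. Functions with $f$ and $\nabla f$ in $L^2$ can be approximated directly using cutoffs $\chi(r/R)$, because the error term $\|f\nabla\chi_R\|_{L^2}$ tends to zero.

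\textbf{Main obstacle.} The only delicate point is Step 1: justifying that the Riemannian balls are comparable to Euclidean balls with $1+o(1)$ errors. This needs the metric-distance comparison coming from the decay of $\varphi^*g - g_{\operatorname{Euc}}$, together with a check that the chart at infinity really has volume fraction exactly $1/|\Gamma|$. Everything else is a citation plus arithmetic.
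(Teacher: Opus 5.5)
Your proposal is correct and follows the paper's route. The paper simply cites Viaclovsky (hyperk\"ahler case) and Balogh--Krist\'aly (general case), and you have filled in the checks that the asymptotic volume ratio is $1/|\Gamma|$ and that $\theta^{-1/2}K(4)^2=(6|\Gamma|)^{1/2}/(8\pi)$.

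One correction to your aside on Viaclovsky: the orbifold Aubin inequality gives only the upper bound $Y\le Y(S^4)/|\Gamma|^{1/2}$. Via scalar-flatness and the conformal compactification, \eqref{sobolevineq:ineq} is equivalent to the reverse bound $Y\ge Y(S^4)/|\Gamma|^{1/2}$, and that lower bound is the actual content of Viaclovsky's theorem.
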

\begin{comment}
\begin{thm}
    For $f \in \dot{H}^1(M),$ we have the Sobolev inequality
    \begin{equation}
        \|f\|_{L^4} \leq \frac{(6|\Gamma|)^{\frac{1}{4}}}{2\sqrt{2\pi}}\|\nabla f \|_{L^2}.
    \end{equation}
    %\begin{comment}
    \begin{equation}
        \|f\|_{L^4} \leq 2\sqrt{2} \pi \|\nabla f \|_{L^2}.
    \end{equation}
    %\end{comment
\end{thm}
\end{comment}
\begin{rmk}\label{rmk:gap}
Notice that this inequality is invariant under metric dilations $g \to \lambda g,$ for constant $\lambda > 0.$ Furthermore, we crucially have that the constant in (\ref{sobolevineq:ineq}) is the $\dot{H}^1$ Sobolev constant of $\R^4/\Gamma.$
\end{rmk}

From this inequality we can deduce the following gap theorem using a version of a recent argument by Vieira \cite{vieiragap}.

\begin{thm}\label{thm:gap}
    For a finite-energy Yang-Mills connection $A$ on an $\SU(r)$-bundle $E \to M,$ we have the implication
    $$\|F_A^+ \|^2_{L^2} < \frac{8 \pi^2}{|\Gamma|} \quad \Rightarrow \quad F^+_A \equiv 0.$$
\end{thm}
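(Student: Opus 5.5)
Since $A$ is Yang-Mills, $D_A^*F_A=0$. Together with the Bianchi identity $D_AF_A=0$, this gives $D_A^*F_A^+ = 0$ and $D_A F^+_A=0$, so $\omega := F^+_A$ is a harmonic self-dual 2-form with values in $\mathfrak{su}(E)$. I would apply the Weitzenb\"ock formula on $\Lambda^+\otimes\mathfrak{su}(E)$:
\begin{equation*}
0=(D_AD_A^*+D_A^*D_A)\omega = \nabla_A^*\nabla_A\omega + \left(\tfrac{s}{3} - 2W^+\right)(\omega) + [\omega,\omega],
\end{equation*}
where $[\omega,\omega]$ is the quadratic term built from the Lie bracket. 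On our $M$ we have $s=0$ and $W^+=0$, so only the quadratic term survives. The standard pointwise estimate (as in Bourguignon--Lawson, or Vieira's refinement for $\SU(r)$) gives
\begin{equation*}
\left|\langle [\omega,\omega],\omega\rangle\right| \le c\,|\omega|^3
\end{equation*}
with a sharp constant $c$. Taking the inner product with $\omega$ and using Kato's inequality then yields, in the weak sense,
\begin{equation*}
|\omega|\,\Delta|\omega| \ge |\nabla_A\omega|^2 - |\nabla|\omega||^2 - c|\omega|^3.
\end{equation*}

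Following Vieira, I would use the refined Kato inequality for harmonic self-dual forms, $|\nabla_A\omega|^2 \ge \tfrac32|\nabla|\omega||^2$. This is what allows the threshold $8\pi^2/|\Gamma|$ to be reached rather than a weaker one. Set $u=|\omega|^{\alpha}$ with an optimal exponent $\alpha$ (I expect $\alpha=1/2$ in the refined-Kato setting). Multiplying by a cutoff $\phi_R^2$ and integrating by parts gives
\begin{equation*}
\|\nabla (\phi_R u)\|_{L^2}^2 \le C_\alpha \int \phi_R^2 |\omega| u^2 + \text{error}(R).
\end{equation*}
Here $C_\alpha$ combines $c$, the Kato gain and $\alpha$, and $\text{error}(R)$ involves $|\nabla\phi_R|^2u^2$. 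Next I apply H\"older and then Theorem \ref{thm:sobolevineq}:
\begin{equation*}
\int \phi_R^2|\omega|u^2 \le \|\omega\|_{L^2}\|\phi_R u\|_{L^4}^2 \le \|\omega\|_{L^2}\frac{(6|\Gamma|)^{1/2}}{8\pi}\|\nabla(\phi_R u)\|_{L^2}^2 .
\end{equation*}
The constants should combine, exactly as in the $S^4$/$\R^4$ case with $|\Gamma|=1$, so that $C_\alpha\frac{(6|\Gamma|)^{1/2}}{8\pi}\|\omega\|_{L^2}<1$ precisely when $\|\omega\|_{L^2}^2<8\pi^2/|\Gamma|$. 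Remark \ref{rmk:gap} guarantees that the Sobolev constant scales exactly with $|\Gamma|$. Absorbing this term leaves $\|\nabla(\phi_R u)\|^2_{L^2}\lesssim \text{error}(R)$.

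The main obstacle is the noncompactness: I need $\text{error}(R)\to 0$ and a justification of the integrations by parts. I would take logarithmic cutoffs, or standard cutoffs on annuli $R<r<2R$, and use finite energy. With $\alpha=1/2$ we get $u^2=|\omega|$, so I must control $R^{-2}\int_{R<r<2R}|\omega|$. By Cauchy--Schwarz and Euclidean volume growth, this is at most $R^{-2}\cdot R^{2}\|\omega\|_{L^2(\text{annulus})}\to0$. Alternatively, elliptic $\epsilon$-regularity gives decay of $|\omega|$ at infinity. Letting $R\to\infty$ yields $\nabla u\equiv0$, so $u$ is constant. Since $u^{4}=|\omega|^2\in L^1$ on an infinite-volume space, $u\equiv0$, hence $F^+_A\equiv0$.
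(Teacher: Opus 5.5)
Your proposal is correct and follows essentially the same route as the paper: the Weitzenb\"ock formula with $s=W^+=0$, the refined Kato inequality with constant $\tfrac32$, and the $\SU(r)$ bracket bound $|\langle[F^+,F^+],F^+\rangle|\le \tfrac{4}{\sqrt3}|F^+|^3$ (so your $C_{1/2}=\tfrac{2}{\sqrt3}$, which makes the absorption condition exactly $\|F_A^+\|_{L^2}^2<8\pi^2/|\Gamma|$), followed by $u=|F_A^+|^{1/2}$, H\"older, and the sharp Sobolev inequality. Your cutoff argument on annuli, with the error term controlled by volume growth and finite energy, supplies the justification for integrating by parts on noncompact $M$, which the paper's proof leaves implicit.
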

\begin{proof}
    Since $(M, g)$ has vanishing scalar and self-dual Weyl curvatures, the Weitzenb{\"o}ck formula for $\Omega_+^2(\mathfrak{g}_E)$ implies
    \begin{align*}
        \frac{1}{2}\nabla^\ast \nabla |F_A^+|^2 + |\nabla_A F_A^+|^2 = \langle [F_A^+, F_A^+], F_A^+\rangle.
    \end{align*} Since $D_A^\ast F_A^+ = D_A F_A^+ = 0,$ the improved Kato inequality \cite[Lemma 3.1]{radedecay} implies
    \begin{align*}
        6|F^+|\left|\nabla |F_A^+|^{\frac{1}{2}}\right|^2 = \frac{3}{2}|\nabla |F_A^+||^2 \leq |\nabla_A F_A^+|^2.
    \end{align*} For structure group $SU(r),$ we have (see \cite[Remark 3.2 (4)]{gappaper})
\begin{align*}
    |\langle [F_A^+, F_A^+], F_A^+\rangle| \leq \frac{4}{\sqrt{3}}|F_A^+|^3.
\end{align*}
Hence
\begin{align*}
    2|F_A^+|^{\frac{3}{2}}\nabla^\ast \nabla |F_A^+|^{\frac{1}{2}} &= \frac{1}{2}\nabla^\ast \nabla |F_A^+|^2 + 6|F_A^+|\left|\nabla |F_A^+|^{\frac{1}{2}}\right|^2 \\
    &\leq 
    \frac{1}{2}\nabla^\ast \nabla |F_A^+|^2 + |\nabla_A F_A^+|^2 \\
    &\leq \frac{4}{\sqrt{3}} |F_A^+|^3.
\end{align*} We divide both sides by $2|F_A^+|,$ integrate over $M,$ and integrate by parts on the left-hand side to obtain
\begin{align*}
    \left\|\nabla |F_A^+|^{\frac{1}{2}}\right\|_2^2 \leq \frac{2}{\sqrt{3}} \left\||F_A^+|^{\frac{1}{2}}\right\|_4^4. 
\end{align*} By H{\"o}lder's inequality,
\begin{align*}
    \left\||F_A^+|^{\frac{1}{2}}\right\|_4^4 \leq \|F_A^+\|_2\|F_A^+\|_4^2, 
\end{align*} so if 
\begin{align*}
    \|F_A^+\|_2^2 < \frac{8\pi^2}{|\Gamma|},
\end{align*} then
\begin{align*}
     \left\|\nabla|F_A^+|^{\frac{1}{2}}\right\|_2^2 < \frac{8\pi}{(6|\Gamma|)^{\frac{1}{2}}} \|F_A^+\|_4^2.
\end{align*} This violates the Sobolev inequality (\ref{sobolevineq:ineq}) unless $F_A^+ \equiv 0,$ so the lemma holds.
\end{proof}

\vspace{10mm}

\section{Basic energy identities}

In this section, we record the basic estimates used throughout the paper. Further details can be found in \cite{gappaper, instantons}. In the sequel, $A(t)$ is a solution of (\ref{ymf}) on $M.$ We abbreviate
\begin{align*}
    F = F(t) = F_{A(t)},
\end{align*} and similarly for $F^+, D^\ast F,$ and so on. We use $C$ to denote a positive, universal constant that may increase in each appearance.

\subsection{Energy identities}
%\subsection{Global energy identity}

We show that if $\mathcal{YM}(A(t))$ is locally bounded in time, then in fact $\mathcal{YM}(A(t))$ is nonincreasing. For these lemmas, $M$ only needs to be a complete (oriented) Riemannian 4-manifold. First we recall the following local energy estimate.

\begin{lemma}\label{lemma:energyineq} %[Split local energy inequality]
    Let $\varphi \in C_c^\infty(M).$ For any times $a < b,$ we have
    \begin{align}\label{energyineq:ineq}
        \left|\int \varphi^2(|F^+(b)|^2  - |F^+(a)|^2) + \int_a^b \hspace{-0.3em} \int \hspace{-0.3em} \varphi^2 |D^\ast F|^2\right| \leq 4\left(\int_a^b \hspace{-0.5em}\int_{\text{supp}(\nabla \varphi)} \hspace{-0.3em}\varphi^2 |D^\ast F|^2 \right)^{\frac{1}{2}}\left(\int_a^b\hspace{-0.3em}\int \hspace{-0.3em} |\nabla \varphi|^2|F^+|^2\right)^{\frac{1}{2}}.
    \end{align} The inequality also holds with $F^-$ replacing $F^+.$
\end{lemma}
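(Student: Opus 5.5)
The plan is to differentiate the localized self-dual energy in time, integrate by parts once against the cutoff, and use the Bianchi identity to see that $F^+$ and $F^-$ each carry exactly half of $D^\ast F$. Since $A(t)$ is a classical (smooth) solution and $\varphi$ has compact support, we may differentiate under the integral. We have $\partial_t F = D_A(\partial_t A) = -D_A D^\ast_A F$. Because the projection onto $\Omega^2_+$ is orthogonal and time-independent,
\begin{align*}
\frac{d}{dt}\int \varphi^2 |F^+|^2 = 2\int \varphi^2 \langle D_A\omega, F^+\rangle, \qquad \omega := \partial_t A = -D^\ast F .
\end{align*}
The cutoff $\varphi^2$ is compactly supported, so integrating by parts gives $\int \langle \omega, D^\ast(\varphi^2 F^+)\rangle$. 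Moreover
$$D^\ast(\varphi^2 F^+) = \varphi^2 D^\ast F^+ - 2\varphi\, \iota_{\nabla\varphi} F^+,$$
up to the sign convention for the interior product, which does not affect the final absolute-value estimate.

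The key algebraic step uses the Bianchi identity. On $2$-forms in dimension four we have $D^\ast = -{\ast} D {\ast}$, so $D^\ast F^+ = -{\ast} D F^+$ and $D^\ast F^- = {\ast} D F^-$. The Bianchi identity $DF^+ + DF^- = DF = 0$ then gives $D^\ast F^+ = D^\ast F^-$, and hence $D^\ast F^\pm = \tfrac12 D^\ast F$. Substituting,
\begin{align*}
\frac{d}{dt}\int \varphi^2|F^+|^2 = -\int \varphi^2 |D^\ast F|^2 + 4\int \varphi\,\langle D^\ast F, \iota_{\nabla\varphi}F^+\rangle .
\end{align*}
The same computation with $F^-$ in place of $F^+$ gives the identical formula, which proves the last sentence of the lemma.

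Next, integrate this identity from $a$ to $b$. The second term is supported on $\mathrm{supp}(\nabla\varphi)$. We bound $|\iota_{\nabla\varphi}F^+| \le |\nabla\varphi||F^+|$ pointwise; with the paper's norm conventions this should hold with constant one, and if necessary we absorb any normalization factor by checking the convention for $|F^+|$. The Cauchy--Schwarz inequality in space-time then gives
$$4\Big(\int_a^b\!\!\int_{\mathrm{supp}(\nabla\varphi)}\varphi^2|D^\ast F|^2\Big)^{1/2}\Big(\int_a^b\!\!\int|\nabla\varphi|^2|F^+|^2\Big)^{1/2},$$
which is exactly the right-hand side of (\ref{energyineq:ineq}).

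The only genuinely nontrivial point is the identity $D^\ast F^+ = \tfrac12 D^\ast F$, which is where the splitting into self-dual and anti-self-dual parts becomes possible. Everything else is routine, apart from bookkeeping of the sign and normalization conventions to confirm that the constant is exactly $4$.
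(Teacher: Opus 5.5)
Your proposal is correct and follows the paper's argument: the paper quotes the identity $\frac{d}{dt}\int\varphi^2|F^+|^2+\int\varphi^2|D^\ast F|^2=4\int\langle\varphi D^\ast F,\nabla\varphi\intprod F^+\rangle$ from its earlier work, integrates from $a$ to $b$, and applies H\"older's inequality on $\mathrm{supp}(\nabla\varphi)\times[a,b]$, exactly as you do. Your derivation of that identity via $D^\ast F^\pm=\tfrac12 D^\ast F$ (from Bianchi and $D^\ast=-{\ast}D{\ast}$) is the standard one. With the usual form-norm convention, $|\nabla\varphi\intprod F^+|\le|\nabla\varphi|\,|F^+|$ holds with constant one, so the constant $4$ comes out exactly and no further convention-checking is needed.
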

\begin{proof}
    Following the proof of \cite[Lemma 3.4]{gappaper}, we have
    \begin{align*}
        \frac{d}{dt} \int \varphi^2 |F^+|^2 + \int \varphi^2 |D^\ast F|^2 = 4 \int \langle \varphi D^\ast F, \nabla \varphi \intprod F^+\rangle.
    \end{align*} We integrate in time from $t = a$ to $t = b.$ By H{\"o}lder's inequality,
    \begin{align*}
        \left|\int_a^b \int \langle \varphi D^\ast F, \nabla \varphi \intprod F^+\rangle \right| &= \left|\int_a^b \int_{\text{supp}(\nabla \varphi)} \langle \varphi D^\ast F, \nabla \varphi \intprod F^+\rangle \right| \\
        &\leq \int_a^b \int_{\text{supp}(\nabla \varphi)} \varphi |D^\ast F| |\nabla \varphi||F^+| \\
        &\leq \left(\int_a^b \int_{\text{supp}(\nabla \varphi)} \varphi^2 |D^\ast F|^2\right)^{\frac{1}{2}}\left(\int_a^b \int_{\text{supp}(\nabla \varphi)}|\nabla \varphi|^2 |F^+|^2\right)^{\frac{1}{2}}.
    \end{align*} Thus (\ref{energyineq:ineq}) follows.
\end{proof}
\begin{lemma}\label{lemma:energyidentity}
    Let $T \in (0, \infty).$ Suppose $A(t)$ is a solution of (\ref{ymf}) on $[0, T]$ such that
    \begin{align}\label{energyidentity:assumption}
        \int_M |F(0)|^2 + \int_0^T \int_M |F|^2 < \infty.
    \end{align} Then for $t \in [0, T],$ we have the global energy identities
    \begin{align}\label{energyidentity:identity}
        \int_M |F^{\pm}(0)|^2 = \int_M |F^{\pm}(t)|^2 + \int_0^t \int_M |D^\ast F|^2
    \end{align} and the conservation of charge
    \begin{align}\label{energyidentity:chernweil}
        \kappa(A(t)) = \kappa(A(0)). %\int_M \operatorname{Tr} \left(F(t) \wedge F(t) \right) = \int \operatorname{Tr} \left(F(0) \wedge F(0)\right).
    \end{align}
\end{lemma}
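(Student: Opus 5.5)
The plan is to derive \eqref{energyidentity:identity} from Lemma \ref{lemma:energyineq} with cutoffs $\varphi = \varphi_R$ exhausting $M$, and to let $R \to \infty$.

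\textbf{Choice of cutoffs.} Since $M$ is complete, we fix a basepoint $x_0$ and take
\[
\varphi_R(x) = \eta\bigl(\dist(x,x_0)/R\bigr),
\]
where $\eta \equiv 1$ on $[0,1]$, $\eta \equiv 0$ on $[2,\infty)$, and $0 \le \eta \le 1$. After smoothing the distance function if necessary, these satisfy $\varphi_R \in C^\infty_c(M)$, $\varphi_R \uparrow 1$ pointwise, and $|\nabla \varphi_R| \le C/R$.

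\textbf{Bounding the right-hand side of \eqref{energyineq:ineq}.} Apply the inequality with $a = 0$, $b = t$. The second factor satisfies
\[
\int_0^t\!\!\int |\nabla\varphi_R|^2|F^\pm|^2 \le \frac{C}{R^2}\int_0^T\!\!\int_M |F|^2 ,
\]
which tends to $0$ by \eqref{energyidentity:assumption}. The first factor is at most $\int_0^t\int \varphi_R^2|D^*F|^2$, and this quantity also appears on the left. We do not yet know it is bounded uniformly in $R$, so we absorb it. Writing $X_R = \int_0^t\int\varphi_R^2|D^*F|^2$ and $\epsilon_R = (C/R^2)\int_0^T\int|F|^2$, the inequality gives
\[
X_R \le \int|F^\pm(0)|^2 + 4 X_R^{1/2}\epsilon_R^{1/2}.
\]
Hence $X_R \le 2\int|F(0)|^2 + C\epsilon_R$, which is bounded uniformly in $R$.

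\textbf{Passing to the limit.} The right-hand side of \eqref{energyineq:ineq} is then at most $4X_R^{1/2}\epsilon_R^{1/2} \to 0$. By monotone convergence, $\int_0^t\int|D^*F|^2 < \infty$ and the term $\int_0^t\int\varphi_R^2|D^*F|^2$ converges to it. For the term $\int\varphi_R^2|F^\pm(t)|^2$, the left-hand inequality shows it is bounded uniformly in $R$. Monotone convergence then gives $F^\pm(t) \in L^2$ and convergence to $\int_M |F^\pm(t)|^2$. The term at time $0$ converges because $F(0) \in L^2$. This yields \eqref{energyidentity:identity} for both signs.

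\textbf{Conservation of charge.} Subtracting the $+$ identity from the $-$ identity, the dissipation term cancels, so
\[
\int_M |F^-(t)|^2 - \int_M |F^+(t)|^2 = \int_M |F^-(0)|^2 - \int_M |F^+(0)|^2 .
\]
By Definition \ref{defn:topologicalcharge}, this says exactly that $\kappa(A(t)) = \kappa(A(0))$, which is \eqref{energyidentity:chernweil}.

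\textbf{Main obstacle.} The delicate step is the absorption: \eqref{energyineq:ineq} contains the unknown quantity $X_R$ on both sides, and finiteness of the dissipation is not assumed. The argument only closes because the hypothesis \eqref{energyidentity:assumption} provides space–time $L^2$ control of $F$, which makes $\epsilon_R \to 0$.

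Beyond this, one should check that Lemma \ref{lemma:energyineq} applies to classical solutions, so that all the local integrals are finite for compactly supported $\varphi$; this holds by smoothness. Note also that the lemma requires nothing beyond completeness of $M$, which is used only to guarantee that cutoffs with $|\nabla\varphi_R| \le C/R$ exist.
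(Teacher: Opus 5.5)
Your proof is correct. For \eqref{energyidentity:identity} it is essentially the paper's argument in a different order. The paper first applies the cutoff estimate to the full curvature, absorbs $\int_0^t\int\varphi^2|D^*F|^2$ by Peter--Paul to obtain $\int|F(t)|^2+\int_0^t\int|D^*F|^2\le\int|F(0)|^2$, and only then removes the cutoff in \eqref{energyineq:ineq} by dominated convergence. You absorb directly in the split inequality for each sign, which gives the uniform bound on $X_R$ and the limit in one pass.

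The genuine difference is in conservation of charge. The paper computes $\frac{d}{dt}\int\varphi\,\Tr(F\wedge F)=2\int d\varphi\wedge\Tr(D^*F\wedge F)$ via the Bianchi identity and Stokes' theorem. It then runs a second cutoff argument, using finiteness of $\int_0^T\int|F|^2$ and of the dissipation. You instead subtract the $+$ and $-$ identities, so the common, now finite, dissipation term cancels. You then conclude from the pointwise identity $\Tr(F\wedge F)=(|F^-|^2-|F^+|^2)\,dV_g$ noted in Definition \ref{defn:topologicalcharge}; the first step also gives $F(t)\in L^2$, so $\kappa(A(t))$ is defined.

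Your route is shorter and avoids the second limiting argument. The paper's route is more self-contained, since it exhibits the variation of $\Tr(F\wedge F)$ directly as an exact form. Yours inherits the same fact implicitly from Lemma \ref{lemma:energyineq} holding with identical dissipation for $F^+$ and $F^-$, and that in turn rests on $D^*F^+=D^*F^-$, i.e. the Bianchi identity.
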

\begin{proof}
    It follows from the proof of Lemma \ref{lemma:energyineq} that for any $\varphi \in C_c^\infty(M)$ and $t \in (0, T],$
    \begin{align*}
        \int \varphi^2 |F(t)|^2 + 2\int_0^t \int \varphi^2 |D^\ast F|^2 \leq \int \varphi^2 |F(0)|^2 + 4\left(\int_0^t \int \varphi^2 |D^\ast F|^2\right)^{\frac{1}{2}}\left(\int_0^t \int |\nabla \varphi|^2 |F|^2\right)^{\frac{1}{2}}.
    \end{align*} By the Peter-Paul inequality,
    \begin{align*}
        \left(\int_0^t \int \varphi^2 |D^\ast F|^2\right)^{\frac{1}{2}}\left(\int_0^t \int |\nabla \varphi|^2 |F|^2\right)^{\frac{1}{2}} \leq \frac{1}{4}\int_0^t\int \varphi^2 |D^\ast F|^2 + \int_0^t \int |\nabla \varphi|^2 |F|^2.
    \end{align*} Thus
    \begin{align*}
        \int \varphi^2 |F(t)|^2 + \int_0^t \int \varphi^2 |D^\ast F|^2 \leq \int \varphi^2 |F(0)|^2 + 4\int_0^t \int |\nabla \varphi|^2 |F|^2.
    \end{align*} Let $\psi : \R \to [0, 1]$ be a smooth cut-off function for $(-\infty, 1]$ supported in $(-\infty, 2],$ and define $\psi_n(x) := \psi(x/n).$ Fix a point $p \in M$ and denote the function $f  := \text{dist}_g(\cdot, p).$ According to \cite[Theorem 1]{smoothapproxoflipschitz}, there exists $\eta \in C^\infty(M)$ with $\|\eta - f\|_\infty \leq 1$ and $\|\nabla \eta\|_\infty \leq 2.$ We now replace $\varphi$ with $\varphi_n := \psi_n \circ \eta.$ By completeness of $M,$ the $\varphi_n$ are compactly supported, and they converge to the constant function $1$ locally uniformly with $\|\nabla \varphi_n\|_\infty = O\left(\frac{1}{n}\right)$ as $n \to \infty.$ Since
    \begin{align*}
        |\nabla \varphi_n|^2 |F|^2 \leq C|F|^2,
    \end{align*} and since we are assuming (\ref{energyidentity:assumption}), the dominated convergence theorem implies
    \begin{align*}
        \int |F(t)|^2 + \int_0^t \int |D^\ast F|^2 \leq \int |F(0)|^2. 
    \end{align*} In particular,
    \begin{align}\label{energyidentity:tensionbounded}
        \int_0^T \int |D^\ast F|^2 < \infty.
    \end{align} The energy identities (\ref{energyidentity:identity}) now follow from (\ref{energyineq:ineq}) by dominated convergence, in view of (\ref{energyidentity:tensionbounded}). \par
    We next show (\ref{energyidentity:chernweil}). We compute
    \begin{align*}
        \frac{1}{2}\frac{d}{dt}\int \varphi \text{Tr}(F \wedge F) &= - \int \varphi \text{Tr}(DD^\ast F \wedge F).
    \end{align*} By the Bianchi identity $D_A F_A = 0$ and Stokes' theorem, this last integral is equal to
    \begin{align*}
        - \int \varphi d \text{Tr}(D^\ast F \wedge F) = \int d\varphi \wedge \text{Tr}(D^\ast F \wedge F).
    \end{align*} Hence
    \begin{align*}
        \left|\int \varphi \text{Tr}(F_{A(t)} \wedge F_{A(t)}) - \int \varphi \text{Tr}(F_{A(0)} \wedge F_{A(0)})\right| &\leq C\int_0^t\int |\nabla \varphi||D^\ast F||F| \\
        &\leq C\int_0^t \int |\nabla \varphi| (|D^\ast F|^2 + |F|^2).
    \end{align*} In view of (\ref{energyidentity:assumption}) and (\ref{energyidentity:tensionbounded}), (\ref{energyidentity:chernweil}) follows from dominated convergence as before.
\end{proof}

\subsection{$\eps$-regularity}

\begin{lemma}\label{lemma:epsreg}%[Split $\eps$-regularity]
    Fix $p > 1.$ Let $x_0 \in M$ and $R > 0,$ and let $B_R(x_0)$ denote the metric ball of radius $R$ about $x_0.$ There exists $\varepsilon_0 > 0$, depending only on $|\Gamma|,$ and $C_{\ref{lemma:epsreg}} \geq 1,$ depending only on $|\Gamma|$ and $p,$ such that the following holds. %Let $B_{R}(x_0)$ denote the image of $B_R(0)$ under $\exp_{x_0}.$ 
    Suppose $A(t)$ solves (\ref{ymf}) on $B_R(x_0) \times \left[T - R^2, T \right]$ with
    \begin{align}\label{epsreg:energysmall}
        \sup_{T - R^2 \leq t \leq T} \| F^+(t) \|_{L^2(B_R(x_0))} \leq \eps_0.
    \end{align}
   % where $\eps_0$ depends on $|\Gamma|.$ 
   We then have
   \begin{equation}\label{epsreg:supbound}
        \sup_{T - \frac{R^2}{4} \leq t \leq T} \|F^+(t) \|_{L^\infty \left( B_{\frac{R}{2}} \left( x_0\right) \right)} \leq \frac{C_{\ref{lemma:epsreg}}}{R^{\frac{4}{p}}} \sup_{T - R^2 \leq t \leq T} \| F^+(t) \|_{L^p(B_R(x_0))}.
    \end{equation}
    \begin{comment}
    \begin{equation}\label{epsreg:supbound}
        \|F^+_{A} \|_{L^\infty \left( B_{R/2} \left( x_0\right) \times \left(T - \frac{R^2}{4}, T \right) \right)} \leq \frac{C_{\ref{lemma:epsreg}}}{R^{\frac{4}{p}}} \sup_{T - R^2 \leq t \leq T} \| F^+_{A(t)} \|_{L^p(B_R(x_0))}.
    \end{equation}
    \end{comment}
\end{lemma}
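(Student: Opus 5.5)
Since the hypothesis and conclusion are dilation-invariant (Remark \ref{rmk:gap}: the Sobolev inequality (\ref{sobolevineq:ineq}) is scale-invariant, and (\ref{epsreg:energysmall}), (\ref{epsreg:supbound}) scale correctly), I will first rescale so that $R = 1$. The strategy is a Moser iteration for the subsolution inequality satisfied by $|F^+|$. The key point is that the constants must depend only on $|\Gamma|$ and $p$, not on the local geometry. This is exactly what the global Sobolev inequality provides, since it holds for compactly supported functions on balls of any size.

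\textbf{Step 1 (differential inequality).} Along (\ref{ymf}), $F^+$ satisfies $(\partial_t + \nabla_A^*\nabla_A) F^+ = [F^+,F^+]$. This uses the Weitzenb\"ock formula on $\Omega^2_+(\mathfrak g_E)$ exactly as in the proof of Theorem \ref{thm:gap}, together with the vanishing of the scalar and self-dual Weyl curvatures. Combined with Kato's inequality and the bound $|\langle [F^+,F^+],F^+\rangle| \le \frac{4}{\sqrt3}|F^+|^3$, this gives, for $u = |F^+|$, the inequality
\[
(\partial_t + \Delta) u \le C u^2 = (Cu)\,u
\]
in the distributional sense. Here $\Delta = \nabla^*\nabla \ge 0$, and one may regularize with $\sqrt{|F^+|^2+\delta^2}$ to handle the zero set of $F^+$.

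\textbf{Step 2 (use the smallness to absorb the potential).} Test the inequality against $\varphi^2 u^{q-1}$ with $q \ge p$, where $\varphi$ is a space-time cutoff between nested parabolic cylinders. In the resulting energy estimate for $w = u^{q/2}$, the potential term $\int \varphi^2 C u\, w^2$ is bounded via H\"older by
\[
C\|u\|_{L^2(B_1)}\,\|\varphi w\|_{L^4}^2 \le C\eps_0 \cdot \tfrac{(6|\Gamma|)^{1/2}}{8\pi}\,\|\nabla(\varphi w)\|_{L^2}^2 .
\]
Choosing $\eps_0$ small depending on $|\Gamma|$, this term is absorbed into the left-hand side. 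One should check that the constant coming from the $q$-dependence of the test function does not destroy the absorption for large $q$; since the coefficient $\frac{4(q-1)}{q^2}$ only improves relative to the potential term's $O(1)$ factor after dividing by $q$ appropriately, one gets an estimate uniform for $q$ in the iteration range, with polynomial growth in $q$. The result is a reverse-H\"older estimate
\[
\sup_t \int \varphi^2 w^2 + \int\!\!\int |\nabla(\varphi w)|^2 \le C q^{a}\int\!\!\int (|\nabla\varphi|^2 + |\partial_t \varphi^2|) w^2 .
\]

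\textbf{Step 3 (iteration).} Apply (\ref{sobolevineq:ineq}) with the parabolic interpolation $\|v\|_{L^{3}_{t,x}}^3 \le \sup_t\|v\|_{L^2}\,\|v\|_{L^4_x L^2_t}^2$-type inequality. This yields a gain $\|u\|_{L^{3q/2}(Q_{r'})} \le (C q^a/(r-r')^2)^{1/q}\|u\|_{L^q(Q_r)}$. Iterating over shrinking cylinders from $Q_1$ to $Q_{1/2}$ with $q_k = p(3/2)^k$, the product of constants converges, giving
\[
\sup_{Q_{1/2}} u \le C \|u\|_{L^p(Q_1)} \le C \sup_t \|u(t)\|_{L^p(B_1)} .
\]
The last step uses that the time interval has length one.

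\textbf{Main obstacle.} The delicate step is Step 2 near $q=p$ when $p<2$. There the test function $u^{q-1}$ has a negative power for $q<2$, and the energy estimate must still be justified; regularizing with $\delta$ handles this. Also, the absorption constant depends on $q$, which is why $C_{\ref{lemma:epsreg}}$ depends on $p$ while $\eps_0$ does not. The way out is to first prove the estimate for $q=2$ with $\eps_0$ fixed, which bounds $\sup u$ by the $L^2$ norm. Then a standard interpolation/covering argument lowers the exponent to $p$: bound $\|u\|_{L^2} \le \|u\|_\infty^{1-p/2}\|u\|_p^{p/2}$ and absorb via the usual iteration over radii lemma.
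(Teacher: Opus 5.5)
Your Step 2 has a genuine gap, and it is the heart of the lemma. Testing with $\varphi^2u^{q-1}$ and setting $w=u^{q/2}$, the gradient term carries the coefficient $\frac{4(q-1)}{q^2}\sim\frac{4}{q}$. The potential term $C\int\varphi^2u\,w^2$, by contrast, carries a coefficient independent of $q$. Absorbing it via $\|u\|_{L^2(B_1)}\le\eps_0$ and (\ref{sobolevineq:ineq}) therefore requires $\eps_0\lesssim (q-1)/q^2$, which fails along your sequence $q_k\to\infty$. The ratio gets worse with $q$, not better.

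Your proposed way out, starting the iteration at $q=2$, does not help: iterating from $q_0=2$ still needs the absorption at every $q_k=2(3/2)^k$. Nor is this a bookkeeping issue. $L^2$ is the scale-critical norm for a potential in dimension four, and a linear subsolution with $L^2$-small potential need not be bounded. For $0<\beta<1$, the function $v=(\log\frac{1}{|x|})^\beta$ on $\mathbf{B}_{1/2}\subset\R^4$ is an unbounded $H^1$ solution of $-\Delta v=Vv$ with $0\le V\le C\beta|x|^{-2}(\log\frac{1}{|x|})^{-1}$, so $\|V\|_{L^2}=O(\beta)$. Any proof must therefore exploit that the potential is $Cu$ itself.

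The missing idea, which is what the paper does, is to spend the smallness of $\eps_0$ at only finitely many exponents, so as to push the integrability of the potential past the critical exponent $\frac{n}{2}=2$. The argument runs in three steps.
\begin{enumerate}
\item Test with $2\varphi^2u$ and absorb; this is your Step 2 at $q=2$, which is fine. It bounds $\int(\int_{B_{3/4}}u^4)^{1/2}\,dt$, hence gives a time $s_0\in[-1,-\frac34]$ with $\|u(s_0)\|_{L^4(B_{3/4})}\le C$.
\item Test with $3\psi^2u^2$ from $s_0$. Absorb the nonlinear term $\int\psi^2u^4\le\|u\|_{L^2}\|\psi u^{3/2}\|_{L^4}^2$ once more using $\eps_0$, and control the cutoff term via $\int u^3\le\|u\|_{L^2}\|u\|_{L^4}^2$ and the previous step. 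This yields $\sup_{-3/4\le t\le 0}\|u(t)\|_{L^3(B_{2/3})}\le C$.
\item Now the potential $Cu$ lies in $L^\infty_tL^3_x$ with $3>2$, i.e.\ it is subcritical, and standard parabolic Moser iteration \cite[Theorem 19.1]{ligeomanalysis} applies with no smallness. The potential term is bounded by $C\|u\|_{L^3}\|\varphi w\|_{L^3}^2$, and $\|\varphi w\|_{L^3}^2\le\eta\|\varphi w\|_{L^4}^2+C\eta^{-2}\|\varphi w\|_{L^2}^2$ with $\eta\sim 1/q$, so the constants grow only polynomially in $q$.
\end{enumerate}
This gives $\sup_{Q_{1/2}}u\le C\|u\|_{L^p}$ with $C$ depending only on $|\Gamma|$ and $p$. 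From there your Step 3 and your $L^2\to L^p$ reduction for $p<2$ go through, and your space-time cutoffs would work equally well in place of the paper's choice of a good initial time $s_0$.
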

\begin{proof}
    Note that if $\theta(t)$ is a solution of (\ref{ymf}) on the Riemannian manifold $(N, h),$ then $\theta(\lambda^{-1} t)$ is a solution on $(N, \lambda h)$ for any $\lambda > 0.$ In particular, we may assume $A(t)$ is defined on the ball of radius $1$ about $x_0$ with respect to the metric $h := R^{-2} g$ on $M,$ for $t \in [-1, 0].$ Furthermore, since $M$ is four-dimensional, the assumption (\ref{epsreg:energysmall}) is preserved, and we have $F^{+_g} = F^{+_h} = F^+.$
    Denote
    \begin{align*}
        u := |F^+|_h.
    \end{align*} Since $(M, \lambda g)$ has vanishing scalar and self-dual Weyl curvatures for any $\lambda > 0$, $u$ satisfies
    \begin{align}\label{epsreg:diffineq}
        \partial_t u - \Delta u \leq Cu^2.
    \end{align} Note that there is no linear term on the right-hand side, c.f. \cite[(2.1)]{instantons}. The standard Schoen-Uhlenbeck $\eps$-regularity argument \cite[Prop. 2.3]{instantons} relies only on a scale-invariant Sobolev inequality (\ref{sobolevineq:ineq}), and therefore goes through in this situation. We provide the following brief alternative argument.
    
    Define a function $\varphi$ by 
    \begin{align*}
        \varphi(x) := 
        \begin{cases}
            1 & \text{dist}_h(x, x_0) \leq \frac{3}{4} \\
            7 - 8\text{dist}_h(x, x_0) & \frac{3}{4} \leq \text{dist}_h(x, x_0) \leq \frac{7}{8}\\
            0 & \frac{7}{8} \leq \text{dist}_h(x, x_0).
        \end{cases}
    \end{align*} We multiply (\ref{epsreg:diffineq}) by $2\varphi^2 u.$ Since $M$ is complete and $\varphi u$ extends by zero to all of $M,$ we may integrate by parts to obtain as in the proof of \cite[Lemma 19.1]{ligeomanalysis}
    \begin{align*}
        \frac{d}{dt} \int_M \varphi^2 u^2 + 2\int_M |\nabla(\varphi u)|^2 &\leq C\int_M \varphi^2 u^3 + 2\int_M |\nabla \varphi|^2 u^2.
    \end{align*} We integrate in time and apply the Sobolev inequality (\ref{sobolevineq:ineq}) and H{\"o}lder's inequality to obtain
    \begin{align*}
        \int_M \varphi^2 u^2(t) + \frac{16\pi}{(6|\Gamma|)^{\frac{1}{2}}}\int_{-1}^t \left(\int_M \varphi^4 u^4\right)^{\frac{1}{2}} &\leq \int_M \varphi^2 u^2(0) + C\sup_{-1 \leq s \leq t}\left(\int_{B_1(x_0)} u^2(s)\right)^{\frac{1}{2}} \int_{-1}^t\left(\int_M \varphi^4 u^4\right)^{\frac{1}{2}} \\
        &+ 2\int_{-1}^t\int_M |\nabla \varphi|^2 u^2 \\
        &\leq C\varepsilon_0\left(1 + \int_{-1}^t\left(\int_M \varphi^4 u^4\right)^{\frac{1}{2}}\right). 
    \end{align*} Thus if 
    \begin{align*}
        \varepsilon_0 \leq \frac{8\pi}{C(6|\Gamma|)^{\frac{1}{2}}},
    \end{align*} we deduce that for $t \in [-1, 0]$
    \begin{align*}
        \frac{8\pi}{(6|\Gamma|)^{\frac{1}{2}}}\int_{-1}^t \left(\int_{B_{\frac{3}{4}}(x_0)} u^4\right)^{\frac{1}{2}} &\leq C.
    \end{align*} In particular, there exists $s_0 \in [-1, -\frac{3}{4}]$ such that
    \begin{align*}
        \left(\int_{B_{\frac{3}{4}}(x_0)} u^4(s_0)\right)^{\frac{1}{2}} \leq C|\Gamma|^{\frac{1}{2}}.
    \end{align*}
   Now let $\psi$ be a cutoff for $B_{\frac{2}{3}}(x_0)$ supported in $B_{\frac{3}{4}}(x_0).$ 
    We multiply (\ref{epsreg:diffineq}) by $3\psi^2 u^2.$ Shrinking $\varepsilon_0,$ we may proceed as before, integrating in time from $s_0$ to $t,$ to obtain
    \begin{align*}
        \int_M \psi^2 u^3(t) + \frac{8\pi}{(6|\Gamma|)^{\frac{1}{2}}}\int_{s_0}^t \left(\int_M \psi^4 u^6\right)^{\frac{1}{2}} &\leq \int_M \psi^2 u^3(s_0) + 3\int_{s_0}^t\int_M |\nabla \psi|^2 u^3 \\
        &\leq \int_{B_{\frac{3}{4}}(x_0)} u^3(s_0) + C\int_{s_0}^t\int_{B_{\frac{3}{4}}(x_0)} u^3.
    \end{align*} 
    Since
    \begin{align*}
        \int_{B_{\frac{3}{4}}(x_0)} u^3 \leq \left(\int_{B_{\frac{3}{4}}(x_0)} u^2\right)^{\frac{1}{2}}\left(\int_{B_{\frac{3}{4}}(x_0)} u^4\right)^{\frac{1}{2}},
    \end{align*} we deduce that
    \begin{align*}
        \sup_{-\frac{3}{4} \leq t \leq 0} \int_{B_{\frac{2}{3}}(x_0)} u^3 \leq C\varepsilon_0|\Gamma|^{\frac{1}{2}} \leq C.
    \end{align*} Since the exponent $3$ is bigger than $2,$ we may apply standard parabolic Moser iteration (see \cite[Theorem 19.1]{ligeomanalysis}) to obtain
    \begin{align*}
        \sup_{-\frac{1}{4} < t < 0}\|u(t)\|_{L^\infty\left(B_{\frac{1}{2}}(x_0)\right)} \leq C_{|\Gamma|, p} \left(\int_{-\frac{3}{4}}^0\int_{B_{\frac{2}{3}}(x_0)} u^p\right)^{\frac{1}{p}} \leq C_{|\Gamma|, p}\sup_{-\frac{3}{4} < t < 0} \|u(t)\|_{L^p\left(B_{\frac{2}{3}}(x_0)\right)}.
    \end{align*} The estimate (\ref{epsreg:supbound}) now follows by undoing the metric rescaling.
\end{proof}

The following lemma is an adaptation of \cite[Lemma 3.4]{gappaper}.

\begin{lemma}\label{lemma:energynotsmall}
Let $E_0, R > 0,$ and let $x_0 \in M.$
    There exists a constant $c \in \left(0, \frac{1}{10}\right)$, depending only on $E_0$ and $|\Gamma|,$ as follows. Given a solution $A(t)$ satisfying 
    \begin{equation}
        \sup_{-(cR)^2 \leq t \leq 0} \|F^+(t)\|_{L^2\left(B_R(x_0)\right)} \leq E_0,
    \end{equation}
    if
    \begin{equation}\label{energynotsmall:assn}
       \left\|F^+\left( -(c R)^2 \right) \right\|_{L^2\left(B_R(x_0)\right)} < c,
       %\leq c^2,
    \end{equation}
    then
    \begin{equation}\label{energynotsmall:est}
        \sup_{ - \frac{1}{16} (cR)^2 \leq t \leq 0 } \|F^+(t)\|_{L^\infty\left(B_{\frac{1}{4}}(x_0)\right)} < \frac{1}{(c R)^2}.
        %\leq \frac{1}{c_0 r^2}.
    \end{equation}
    \begin{comment}
    \begin{equation}\label{energynotsmall:assn}
       \int_{B_{R(x_0)} \left|F^+\left( -(c R)^2 \right) \right|^2 <  c^2,
       %\leq c^2,
    \end{equation}
    then
    \begin{equation}\label{energynotsmall:est}
        \sup_{ B_{\frac{R}{4}} \times \LB - \frac{1}{16} (cR)^2, 0 \right) } |F^+| < \frac{1}{c^2 R^2}.
        %\leq \frac{1}{c_0 r^2}.
    \end{equation}
    \end{comment}
\end{lemma}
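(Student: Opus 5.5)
The plan is to first propagate the smallness hypothesis \eqref{energynotsmall:assn} forward in time on a slightly smaller ball using the local energy inequality of Lemma \ref{lemma:energyineq}, and then to feed the resulting uniform small-energy bound into the $\eps$-regularity estimate of Lemma \ref{lemma:epsreg} with $p = 2$ on parabolic cylinders of scale comparable to $cR$. Throughout, I read $B_{\frac14}(x_0)$ in \eqref{energynotsmall:est} as $B_{\frac R4}(x_0)$.

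\emph{Step 1 (energy propagation).} Choose a Lipschitz cutoff $\varphi$ with $\varphi \equiv 1$ on $B_{R/2}(x_0)$, $\operatorname{supp}\varphi \subset B_R(x_0)$, and $|\nabla \varphi| \leq 4/R$. If smoothness is required in Lemma \ref{lemma:energyineq}, either approximate $\varphi$ or note that the proof only uses integration by parts.

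Apply \eqref{energyineq:ineq} with $a = -(cR)^2$ and any $b \in [a, 0]$, keeping only the upper bound. Writing $X^2 = \int_a^b\int \varphi^2 |D^*F|^2$ and $Y^2 = \int_a^b \int |\nabla\varphi|^2|F^+|^2$, we get
\begin{align*}
\int \varphi^2 |F^+(b)|^2 \leq \int \varphi^2 |F^+(a)|^2 - X^2 + 4XY \leq c^2 + 4Y^2 .
\end{align*}
The second inequality uses Peter--Paul in the form $4XY \leq X^2 + 4Y^2$. The hypothesis on the sup of the $L^2(B_R)$ energy gives
\begin{align*}
Y^2 \leq (cR)^2 \cdot \frac{16}{R^2} \cdot E_0^2 = 16 c^2 E_0^2 .
\end{align*}
Hence
\begin{align*}
\sup_{-(cR)^2 \leq t \leq 0} \|F^+(t)\|_{L^2(B_{R/2}(x_0))}^2 \leq c^2\left(1 + 64 E_0^2\right).
\end{align*}

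\emph{Step 2 ($\eps$-regularity).} Fix $x \in B_{R/4}(x_0)$ and $t_0 \in [-\tfrac1{16}(cR)^2, 0]$, and set $r = cR$. Then $[t_0 - r^2, t_0] \subset [-\tfrac{17}{16}(cR)^2, 0]$. This slightly exceeds the interval covered in Step 1, so I will instead take $r = \tfrac{1}{2}cR$, giving $[t_0 - r^2, t_0] \subset [-(cR)^2, 0]$. Since $c < \tfrac1{10}$, we have $B_r(x) \subset B_{R/2}(x_0)$.

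Choose $c$ small enough, depending on $E_0$ and on $\eps_0 = \eps_0(|\Gamma|)$, that $c(1+64E_0^2)^{1/2} \leq \eps_0$. Lemma \ref{lemma:epsreg} with $p=2$ then applies on $B_r(x) \times [t_0 - r^2, t_0]$ and yields
\begin{align*}
|F^+(x,t_0)| \leq \frac{C_{\ref{lemma:epsreg}}}{r^2}\, c\left(1+64E_0^2\right)^{1/2} = \frac{4 C_{\ref{lemma:epsreg}}\, c\, (1+64E_0^2)^{1/2}}{(cR)^2}.
\end{align*}
Shrinking $c$ further so that $4C_{\ref{lemma:epsreg}}\, c\,(1+64E_0^2)^{1/2} < 1$ gives \eqref{energynotsmall:est}. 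Here $C_{\ref{lemma:epsreg}}$ is evaluated at $p=2$, so it depends only on $|\Gamma|$, and therefore $c$ depends only on $E_0$ and $|\Gamma|$.

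\emph{Main obstacle.} The only delicate point is Step 1. One must use the one-sided form of \eqref{energyineq:ineq} so that the good term $-\int\int\varphi^2|D^*F|^2$ absorbs the cross term. One must also notice that the cutoff error $Y^2$ carries the factor $(cR)^2/R^2 = c^2$ from the short time interval, which is what makes it small relative to $E_0$.

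The rest is bookkeeping of radii and time intervals, so that every cylinder used in Step 2 lies inside the region controlled in Step 1, together with the observation that all constants are scale-invariant by Remark \ref{rmk:gap}.
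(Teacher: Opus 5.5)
Your proposal is correct and follows the paper's proof. Both use the local energy inequality (\ref{energyineq:ineq}) with Peter--Paul to propagate the small initial energy to $B_{R/2}(x_0)$, with the short time interval making the cutoff error $O(c^2E_0^2)$. Both then apply Lemma \ref{lemma:epsreg} with $p=2$ at scale comparable to $cR$. Your bookkeeping is more explicit than the paper's: it tracks radii, time intervals and the constant $C_{\ref{lemma:epsreg}}$, whereas the paper rescales to $R=1$ and suppresses that constant in its final step.
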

\begin{proof} 
By rescaling, we may assume without loss of generality that $R = 1.$ %parabolically rescaling by the factor $r,$ we 
The assumption (\ref{energynotsmall:assn}) then reads
\begin{equation}
    \int_{B_1(x_0)} |F^+(-c^2)|^2 < c^2.
\end{equation}
Taking $\varphi$ in (\ref{energyineq:ineq}) to be a cutoff for $B_{\frac{1}{2}} := B_{\frac{1}{2}}(x_0)$ supported in $B_1$, we have for $\tau \in (-c^2, 0)$
\begin{align*}
    \int_{B_{\frac{1}{2}}} |F^+(\tau)|^2 + \int_{-c^2}^\tau \int \hspace{-0.3em} \varphi^2 |D^\ast F|^2 \leq \int_{B_1} |F^+(-c^2)|^2 + 4\left(\int_{-c^2}^\tau \int \hspace{-0.3em}\varphi^2 |D^\ast F|^2\right)^{\frac{1}{2}}\left(\int_{-c^2}^\tau \int \hspace{-0.3em}|\nabla \varphi|^2 |F^+|^2\right)^{\frac{1}{2}}.
\end{align*} By the Peter-Paul inequality,
\begin{align*}
    \left(\int_{-c^2}^\tau \int \varphi^2 |D^\ast F|^2\right)^{\frac{1}{2}}\left(\int_{-c^2}^\tau \int |\nabla \varphi|^2 |F^+|^2\right)^{\frac{1}{2}} \leq \frac{1}{8} \int_{-c^2}^\tau \int \varphi^2 |D^\ast F|^2 + 2 \int_{-c^2}^\tau \int |\nabla \varphi|^2 |F^+|^2
\end{align*} so we have
\begin{align*}
    \int_{B_{\frac{1}{2}}} |F^+(\tau)|^2 + \frac{1}{2}\int_{-c^2}^\tau \int \varphi^2 |D^\ast F|^2 &\leq \int_{B_1} |F^+(-c^2)|^2 + 8\int_{-c^2}^\tau \int |\nabla \varphi|^2 |F^+|^2 \\
    &\leq c^2 + C\int_{-c^2}^\tau \int |F^+|^2 \\
    &\leq c^2 + c^2CE_0^2.
\end{align*} Thus if 
\begin{align*}
    c^2(1 + CE_0^2) < \varepsilon_0^2,
\end{align*} then Lemma \ref{lemma:epsreg} yields
\begin{align*}
    \sup_{B_{\frac{1}{4}} \times \left[ -\frac{1}{16} c^2, 0 \right] }|F^+| \leq \frac{\sqrt{c^2(1+CE_0^2)}}{c^2} = \frac{\sqrt{1+CE_0^2}}{c}< \frac{1}{c^2}.
\end{align*} Undoing the rescaling yields (\ref{energynotsmall:est}).
\end{proof}

\vspace{10mm}

\section{Curvature decay in $L^2 \cap L^\infty$}

%\subsection{Optimal split epsilon-regularity and energy decay}

In this section, we prove the quantitative version of Theorem \ref{thm:main}$a.$ We first establish that $F^+$ is scale-invariantly bounded along a solution of (\ref{ymf}) with self-dual energy less than the gap (\ref{main:gap}). This estimate is analogous to \cite[Theorem 3.3]{gappaper}.

\begin{thm}\label{thm:supbound}
  Given $r \in \N$ and $\gamma > 0,$ there exists $C_{\ref{thm:supbound}} \geq 1,$ depending only on $r, \gamma,$ and $(M,g),$ such that the following holds. %a bounded positive function $\kappa(t) = \kappa_{M,r,\gamma}(t)$ with $\kappa(t) \searrow 0$ as $t \to \infty,$ as follows.
    
    Let $A_0$ be a connection on an $\SU(r)$-bundle $E$ that satisfies
    \begin{equation}\label{supbound:totalenergybound}
        \mathcal{YM}(A_0) \leq \gamma^{-1}
        %< \gamma^{-1}
    \end{equation}
    and
    \begin{equation}\label{supbound:gap}
        \|F^+_{A_0} \|^2 \leq \frac{8 \pi^2}{|\Gamma|} - \gamma.
    \end{equation}
        Then any bounded-energy classical solution $A(t)$ of (\ref{ymf}) %the Yang-Mills flow
        with $A(0) = A_0$ satisfies
    \begin{align}\label{supbound:uniformbound}
        \|F^+(t)\|_\infty \leq \frac{C_{\ref{thm:supbound}}}{t}
        %<  \frac{C_{M,r,\gamma}}{t}
    \end{align}
    for $0 < t < \infty.$ \par
    \begin{comment}
    We further have
        \begin{align}\label{supbound:decay}
        \|F^+(t)\|_\infty = o\left( \frac{1}{t} \right)
    \end{align}
    as $t \to \infty.$
    \end{comment}
\end{thm}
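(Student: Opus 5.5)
We argue by contradiction with a blow-up/compactness argument, modeled on the proof of \cite[Theorem 3.3]{gappaper}. Suppose no such constant exists. Then there are connections $A_{0,i}$ on $\SU(r)$-bundles $E_i$ satisfying (\ref{supbound:totalenergybound}) and (\ref{supbound:gap}), solutions $A_i(t)$, and times $t_i > 0$ with $t_i \|F_i^+(t_i)\|_\infty \to \infty$.

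\textbf{Step 1: point-picking.} Using a standard point-selection argument applied to the scale-invariant quantity $t\,|F_i^+(x,t)|$, we choose $(x_i, t_i)$ so that, with $Q_i := |F_i^+(x_i,t_i)| \approx \|F_i^+(t_i)\|_\infty$, we have $t_i Q_i \to \infty$ and $|F_i^+| \leq 2Q_i$ on a backward parabolic region of size $\gg Q_i^{-1/2}$ around $(x_i,t_i)$. A supremum may not be attained on the noncompact $M$, so we pick near-maximizers instead; the finiteness of $\sup_x|F^+(t)|$ for the classical solution is used here. By Lemma \ref{lemma:energyidentity}, $\|F_i^+(t)\|_{L^2}^2 \leq \frac{8\pi^2}{|\Gamma|} - \gamma$ and $\YM(A_i(t)) \leq \gamma^{-1}$ for all $t$.

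\textbf{Step 2: the $\eps$-regularity dichotomy.} Set $\lambda_i := Q_i^{-1/2}$, so $\lambda_i^2 \ll t_i$. Apply Lemma \ref{lemma:energynotsmall} with $E_0^2 = 8\pi^2/|\Gamma|$ and $R$ a fixed multiple of $\lambda_i$, so that its conclusion would contradict $|F^+_i(x_i,t_i)| = Q_i$. Hence the hypothesis (\ref{energynotsmall:assn}) must fail, and at an earlier time $t_i - (c R)^2$ there is self-dual energy at least $c$ in $B_{R}(x_i)$. Combined with the local energy inequality (Lemma \ref{lemma:energyineq}), this shows that energy $\geq c/2$ persists on a time interval of length comparable to $\lambda_i^2$.

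\textbf{Step 3: rescaling and the limit.} Parabolically rescale by $\lambda_i$ about $(x_i,t_i)$. Since $t_i/\lambda_i^2 \to \infty$, the rescaled flows exist on $[-T_i, 0]$ with $T_i \to \infty$, have $|F^+| \leq 2$ on large regions, and have total energy $\leq \gamma^{-1}$. The rescaled manifolds $(M, \lambda_i^{-2} g)$ converge either to $\R^4/\Gamma'$ or to $\R^4$ (if $x_i$ stays in a bounded region, or escapes to infinity, with $\lambda_i \to 0$), or to a rescaled copy of $M$ or $\R^4/\Gamma$ (if $\lambda_i \to \infty$). In every case the Sobolev constant is still bounded by (\ref{sobolevineq:ineq}), since the inequality is dilation-invariant and passes to limits. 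Next, the global energy identity gives $\int_{-T_i}^0\!\int |D^*F|^2 \to 0$ after rescaling: $\int_0^\infty\!\int |D^*F|^2$ is bounded, and the rescaled dissipation over the window equals the original dissipation over $[t_i - T_i\lambda_i^2, t_i]$. Choosing the window within a dyadic time range where the dissipation is small, via pigeonhole over dyadic scales $[t_i/2, t_i]$, makes this tend to $0$. Away from finitely many anti-self-dual bubbling points we obtain Uhlenbeck compactness in a suitable gauge, and the limit is a Yang-Mills connection $A_\infty$ on the limit space with $\|F^+_{A_\infty}\|_2^2 \leq \frac{8\pi^2}{|\Gamma|} - \gamma$. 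The $F^+$ bound is uniform, so any concentration comes from $F^-$ only, and $F^+$ converges locally smoothly. By Step 2, $\|F^+_{A_\infty}\|_{L^2} \geq c/2 > 0$.

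\textbf{Step 4: contradiction.} Theorem \ref{thm:gap}, whose proof uses only the Weitzenb\"ock formula, the Kato inequality and (\ref{sobolevineq:ineq}), applies on each possible limit space, including $\R^4/\Gamma'$ with $|\Gamma'| \leq |\Gamma|$ and flat $\R^4$. It forces $F^+_{A_\infty} \equiv 0$, a contradiction.

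The main obstacle is Step 3. We must justify smooth local convergence of $F^+$ despite possible $F^-$ bubbling, and carry out the gap argument for a limit that is Yang-Mills only away from points. For the latter, we run the Kato/Weitzenb\"ock argument directly on $|F^+|$, which is smooth and bounded; removable singularities handle the bubbling points. We also must choose the time window so that the tension vanishes. We expect to follow \cite[Theorem 3.3]{gappaper} closely here, with the scale-invariant Sobolev inequality replacing compactness.
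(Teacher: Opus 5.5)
Your overall architecture matches the paper: contradiction, parabolic rescaling at the blow-up scale, an Uhlenbeck limit on $\R^4$, $(M,\sigma g)$ or $\R^4/\Gamma$, and a contradiction via Theorem~\ref{thm:gap}. The gap is in the step you yourself flag, making the rescaled tension vanish. What you propose there does not work, and it is the heart of the proof.

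Your rescaled window corresponds to $[t_i-\lambda_i^2,t_i]$ (or $[t_i-T_i\lambda_i^2,t_i]$). This is a subinterval of $[t_i/2,t_i]$ of relative length $\sim (t_iQ_i)^{-1}\to 0$, and its position is dictated by the point-picking. Pigeonholing over dyadic time ranges cannot be combined with this, because nothing forces $t\|F^+(t)\|_\infty$ to be large in a range of small dissipation. The backward $\eps$-regularity of Lemma~\ref{lemma:energynotsmall} only pushes largeness of $t\|F^+\|_\infty$ back by a fraction $\sim 1/(tQ)$ of $t$ per step, with the level dropping by a fixed factor. So a single burst can sit entirely inside one dyadic interval that carries dissipation of order one. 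Pigeonholing over nested windows ending at the same $t_i$ gains nothing either, since they are not disjoint. One point-picked window is one candidate. The pigeonhole needs many \emph{disjoint} candidates, each keeping the $L^\infty$ bound and nontrivial $F^+$ at its endpoint.

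The missing idea is the paper's multi-level construction: a pigeonhole over parabolic scales inside a single burst, tied to the choice of blow-up time.
\begin{itemize}
\item Since $t\|F_i^+(t)\|_\infty\to 0$ as $t\searrow 0$, take the first hitting times $t_{i,0}<\dots<t_{i,i}$ of the levels $c^{-4j-2}$, with $c$ from Lemma~\ref{lemma:energynotsmall}.
\item The disjoint intervals $[t_{i,j-1},t_{i,j}]$ share total dissipation at most $8\pi^2/|\Gamma|$. Hence some $j_i\in[\lceil i/2\rceil,i]$ has dissipation at most $16\pi^2/(i|\Gamma|)$.
\item Set $\tau_i=t_{i,j_i}$ and rescale by $\sigma_i=(\tau_ic^{4j_i+2})^{-1}$. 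The first-hitting property gives $|F^+|\le 2$ on all of $M\times[-1,0]$, so no spatial point-picking is needed, and it also gives $\|F^+(0)\|_\infty\ge 1$.
\item The contrapositive of Lemma~\ref{lemma:energynotsmall} at scale $R=c^{2j_i}\sqrt{\tau_i}$ shows that $t\|F^+\|_\infty$ already exceeds the previous level $c^{-4(j_i-1)-2}$ at time $\tau_i(1-c^{4j_i+2})$. Thus $t_{i,j_i-1}<\tau_i(1-c^{4j_i+2})$, so the low-dissipation interval covers the whole rescaled window $[-1,0]$.
\end{itemize}
Only a unit window is obtained this way. Your claim that the tension vanishes on $[-T_i,0]$ with $T_i\to\infty$ is more than the pigeonhole gives, though also more than you need.

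Your case list is also mislabeled:
\begin{itemize}
\item $\lambda_i\to 0$ (blow-up) gives $\R^4$ wherever $x_i$ lies.
\item $\lambda_i\to\infty$ (blow-down) gives $\R^4/\Gamma$ or $\R^4$. The core of $M$ collapses to the cone point, which the paper handles by excising a small ball about $0$ using $|F^+|\le 2$.
\item Bounded $\lambda_i$ gives $(M,\sigma g)$ or $\R^4$.
\end{itemize}
No other group $\Gamma'$ arises, since $M$ is smooth.
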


\begin{comment}
 
\begin{lemma}
    Let $(M, g)$ be a Kronheimer ALE with group $\Gamma$ at infinity {\color{red} fix terminology}, oriented such that the self-dual Weyl curvature vanishes. Given $r \in \N$ and $\eps > 0,$ there exists $C_{M,r,\eps} >0$ as follows.
    
    Let $A_0$ be a connection on an $\SU(r)$-bundle $E$ which satisfies
    \begin{equation}
        \|F^+_{A_0} \|^2 \leq \frac{8 \pi^2}{|\Gamma|} - \eps.
    \end{equation}
        Then the solution of Yang-Mills flow starting at $A_0$ satisfies
    \begin{align*}
     t \|F^+(t)\|_\infty \leq \frac{C_{M,r,\eps}}{1 + t} + \eps
    \end{align*}
    for $0 < t < \infty.$
\end{lemma}

\end{comment}

\begin{proof}
    Suppose for contradiction the bound is false. Then there exists a sequence of solutions $A_i(t)$ of (\ref{ymf}) satisfying (\ref{supbound:totalenergybound}) and (\ref{supbound:gap}) such that 
    \begin{align*}
        \sup_{0 < t < \infty} t\|F^+(t)\|_\infty > c^{-4i-2},
    \end{align*} where $c < 1$ is the constant from Lemma \ref{lemma:energynotsmall} with $E_0 = \frac{8\pi^2}{|\Gamma|}.$ Since the $A_i(t)$ are classical solutions, we have
    \begin{align*}
        \lim_{t \searrow 0} t\|F_i^+(t)\|_\infty = 0,
    \end{align*} which implies the following. For each $i,$ and for each $j$ with $0 \leq j \leq i,$ there are times $t_{i, j} \in (0, \infty)$ such that
    \begin{align}\label{supbound:comparablesup}
        \sup_{0 \leq t \leq t_{i, j}} t\|F_i^+(t)\|_\infty = t_{i, j}\|F_i^+(t_{i, j})\|_\infty = c^{-4j-2}.
    \end{align} The assumption (\ref{supbound:gap}) and the global energy identity (\ref{energyidentity:identity}) imply
    \begin{align*}
        \int_0^{t_{i, i}} \int_M |D_i^\ast F_i|^2 \, dV_g \, dt \leq \int_M |F_i^+(0)|^2 \, dV_g \leq \frac{8\pi^2}{|\Gamma|}.
    \end{align*} Then since $t_{i, j} < t_{i, j+1}$ by construction, there exists $j_i$ with $\lceil \frac{i}{2}\rceil \leq j_i \leq i$ such that
    \begin{align}\label{supbound:tensionsmall}
        \int_{t_{i, j_i-1}}^{t_{i, j_i}} \int_M |D_i^\ast F_i|^2 \, dV_g \, dt \leq \frac{16\pi^2}{i|\Gamma|}.
    \end{align} Denote
    \begin{align*}
        \tau_i := t_{i, j_i}.
    \end{align*} By the contrapositive of Lemma \ref{lemma:energynotsmall} with $R = c^{2j_i}\sqrt{\tau_i}$, the condition
    \begin{align*}
        \tau_i\|F_i^+(\tau_i)\|_\infty \geq c^{-4j_i-2}
    \end{align*} implies that there exists $p_i \in M$ such that
    \begin{align*}
        \|F_i^+(\tau_i(1-c^{4j_i+2}))\|_{L^2(B_R(p_i))}\geq c,
    \end{align*} which further implies
    \begin{align*}
        \|F_i^+(\tau_i(1-c^{4j_i+2}))\|_{L^\infty(B_R(p_i))}\geq \frac{c}{C\tau_ic^{4j_i}},
    \end{align*} whence
    \begin{align*}
        \tau_i(1-c^{4j_i+2})\|F_i^+(\tau_i(1-c^{4j_i+2}))\|_\infty \geq (1-c^{4j_i+2})C^{-1}c^{-4j_i+1} > c^{-4(j_i-1)-2}.
    \end{align*} By definition of the $t_{i, j},$
    \begin{align*}
        t_{i, j_i-1} < \tau_i(1-c^{4j_i+2}),
    \end{align*} so that \begin{align}\label{supbound:intervalhasdefinitesize}
        \tau_i - t_{i, j_i-1} > \tau_ic^{4j_i+2}.
    \end{align}\par
    \begin{comment}
    Now, it follows from epsilon regularity that $|F_i^+(x, \tau_i)| \to 0$ uniformly in $x$ tending to spatial infinity. Thus there exist $p_i \in M$ such that
    \begin{align}\label{supbound:attainsup}
        \tau_i|F_i^+(p_i, \tau_i)| = c^{-4j_i-1}.
    \end{align}
    \end{comment}
    Denoting
    \begin{align*}
        \sigma_i := (\tau_i c^{4j_i+2})^{-1},
    \end{align*} we define metrics
    \begin{align*}
        g_i := \sigma_i g
    \end{align*} and families of connections $\theta_i(t)$ by
    \begin{align*}
        \theta_i(t) := A_i(\tau_i + \sigma_i^{-1}t).
    \end{align*} Note that $\theta_i$ solves (\ref{ymf}) on $(M, g_i) \times [-1, 0],$ and moreover (\ref{supbound:intervalhasdefinitesize}) and the scale-invariant bound (\ref{supbound:tensionsmall}) imply
    \begin{align}\label{supbound:tensiongoestozero}
        \int_{-1}^0 \int_M |D_{\theta_i}^\ast F_{\theta_i}|_{g_i}^2 \, dV_{g_i} \, dt \to 0
    \end{align} as $i \to \infty.$ Furthermore, (\ref{supbound:comparablesup}) implies
    \begin{align*}
        \tau_i(1-c^{4j_i+2}) \sup_{\tau_i(1-c^{4j_i+2}) \leq t \leq \tau_i} \|F_i^+(t)\|_{\infty, g} \leq \sup_{\tau_i(1-c^{4j_i+2}) \leq t \leq \tau_i} t\|F_i^+(t)\|_{\infty, g} \leq c^{-4j_i-2},
    \end{align*} so since
    \begin{align*}
        \sup_{-1 \leq t \leq 0} \|F_{\theta_i(t)}^+\|_{\infty, g_i} = \sigma_i^{-1} \sup_{\tau_i(1-c^{4j_i+2}) \leq t \leq \tau_i} \|F_i^+(t)\|_{\infty, g}  
    \end{align*} we deduce that
    \begin{align}\label{supbound:boundedoncylinder}
        \|F_{\theta_i}^+\|_{L^\infty(M \times [-1, 0]), g_i} \leq \sigma_i^{-1}\frac{\sigma_i}{1-c^{4j_i+2}} \leq 2.
    \end{align} The assumption (\ref{supbound:comparablesup}) further yields
    \begin{align*}
        \|F_{\theta_i(0)}^+\|_\infty \geq 1. 
    \end{align*} Then in view of the uniform bound (\ref{supbound:boundedoncylinder}), parabolic Moser iteration implies that for some small $\kappa > 0,$ there exists $q_i \in M$ and a time $s_i \in \left[-\frac{1}{2}, 0\right]$ such that
    \begin{align}\label{supbound:capturedenergy}
        \int_{B_{1}^{g_i}(q_i)} |F_{\theta_i(s_i)}^+|_{g_i}^2 \, dV_{g_i} \geq \kappa.
    \end{align}
    \begin{comment}
    Additionally, the condition (\ref{supbound:attainsup}) becomes
    \begin{align}\label{supbound:supisone}
        |F_{\theta_i(0)}^+(p_i)|_{g_i} = 1,
    \end{align} so it follows from the contrapositive of Lemma \ref{lemma:energynotsmall} that there is a small $\kappa > 0$ such that for all $i$
    \begin{align}\label{supbound:capturedenergy}
        \int_{B_{1/\kappa}^{g_i}(p_i)} |F_{\theta_i(-c/2)}^+|_{g_i}^2 \, dV_{g_i} \geq \kappa.
    \end{align} 
    \end{comment}
    Finally, the scale-invariant bounds (\ref{supbound:totalenergybound}) and (\ref{supbound:gap}) imply that for all $i$ and for all $t \in [-1, 0],$
    \begin{align}\label{supbound:sequencetotalenergybound}
        \mathcal{YM}_{g_i}(\theta_i(t)) \leq \gamma^{-1}
    \end{align} and 
    \begin{align}\label{supbound:sequencegap}
         \|F_{\theta_i(t)}^+\|_{g_i}^2 \leq \frac{8\pi^2}{|\Gamma|} - \gamma.
    \end{align}\par
     Up to passing to a subsequence (again labeled by $i$), there are three possibilities for the $\sigma_i$: \par
     \vspace{2mm}

\noindent {\bf    Case 1:} $\lim_{i \to \infty} \sigma_i = 0$. Given $\lambda > 0,$ let $\rho_\lambda$ denote the rescaling map on Euclidean space given by $\rho_\lambda(x) = \lambda^{-\frac{1}{2}} x.$ We also let $\mathbf{B}_r \subset \R^4$ denote the Euclidean ball of radius $r$ about $0,$ and we abbreviate
\begin{align*}
    U_r &:= (\R^4 \setminus \mathbf{B}_r)/\Gamma.
\end{align*} %Recall %from Kronheimer's construction Theorem \ref{thm:asymptoticchart}
%that 
Since $(M, g)$ is ALE, there is a %measure zero
compact set $\Sigma \subset M,$ a constant $r_0 > 0,$ and a diffeomorphism $\varphi : U_{r_0} \to M \setminus \Sigma$ such that, denoting
\begin{align*}
    \psi_i &:= \varphi \circ \rho_{\sigma_i} : U_{r_0\sqrt{\sigma_i}} \to M \setminus \Sigma, \\
    h_i &:= \psi_i^\ast g_i, \\
    h &:= \text{ standard flat metric on } \R^4/\Gamma,
\end{align*} $h_i$ converges to $h$ in $C^\infty(U_r, h)$ for all $r > 0.$ Up to passing to a subsequence, we have the following two subcases: \par
\emph{Subcase 1:} The points
\begin{align*}
    y_i := \psi_i^{-1}(q_i)
\end{align*} converge in the topology of $\R^4 /\Gamma$ to some point $y \in \R^4 / \Gamma,$ where we set $\psi_i^{-1}(q) = 0$ if $q \in \Sigma.$ In view of the bounds (\ref{supbound:tensiongoestozero}) and (\ref{supbound:sequencetotalenergybound}), we may apply Uhlenbeck compactness for Yang-Mills flow and Uhlenbeck's removable singularity theorem (see e.g. proof of \cite[Theorem 3.3]{gappaper} and \cite[Theorem 1.3 and \textsection 5]{waldronuhlenbeck}) as follows. We pull back the bundle $E$ and the connections $\theta_i(t)$ via $\psi_i$ to obtain solutions $\Theta_i$ of (\ref{ymf}) on bundles $E_i$ over $(U_{r_0\sqrt{\sigma_i}}, h_i) \times [-1, 0].$ After passing to a subsequence, there exists a finite collection of points $\{z_k\} \subset U_0,$ an $\operatorname{SU}(r)$-bundle $E_\infty$ over $U,$ a Yang-Mills connection $\Theta$ on $(U_0, h, E_\infty),$ an increasing exhaustion $\{V_i\}$ of $U_0 \setminus \{z_k\},$ and bundle maps $u_i : E_i|_{V_i} \to E_\infty|_{V_i},$ such that $u_i(\Theta_i)$ converges to $\Theta$ in $C_{\text{loc}}^\infty((U_0 \setminus \{z_k\}, h) \times (-1, 0]).$ \par

We claim that    \begin{align}\label{supbound:finalcapturedenergy}
        \int_{B_{2}^h(y)} |F_{\Theta}^+|_h^2 \, dV_h \geq \frac{\kappa}{4}.
    \end{align} To see this, note that $\lim_{i \to \infty} \text{vol}_{g_i}(\Sigma) = 0$ since $\lim_{i \to \infty} \sigma_i = 0.$ Thus by the $L^\infty$ bound (\ref{supbound:boundedoncylinder}) and the captured energy (\ref{supbound:capturedenergy}), we have for $i$ large enough that
    \begin{align*}
        \int_{B_1^{g_i}(q_i) \setminus \Sigma} |F_{\theta_i(s_i)}^+|_{g_i}^2 \, dV_{g_i} &= \int_{B_1^{g_i}(q_i)} |F_{\theta_i(s_i)}^+|_{g_i}^2 \, dV_{g_i} - \int_{B_1^{g_i}(q_i) \cap \Sigma} |F_{\theta_i(s_i)}^+|_{g_i}^2 \, dV_{g_i} \\
        &\geq \kappa - 4\text{vol}_{g_i}(\Sigma) \\
        &\geq \frac{\kappa}{2}.
    \end{align*} Hence
    \begin{align*}
        \int_{\psi_i^{-1}\left(B_1^{g_i}(q_i)\setminus \Sigma\right)} \left|F_{\Theta_i(s_i)}^+\right|_{h_i}^2 \, dV_{h_i} \geq \frac{\kappa}{2}.
    \end{align*} Next note that for $i$ large enough, we have for some $x_0 \in M$ that
    \begin{align*}
        \psi_i\left(B_\kappa^h(0) \cap U_{r_0\sqrt{\sigma_i}}\right) \subset B_{2\kappa\sigma_i^{-\frac{1}{2}}}^g(x_0).
    \end{align*} Thus for $i$ large enough,
    \begin{align*}
        \text{vol}_{h_i}\left(B_\kappa^h(0) \cap U_{r_0\sqrt{\sigma_i}}\right) \leq C\kappa^4.
    \end{align*} Then since $B_2^h(y) \supset \psi_i^{-1}\left(B_1^{g_i}(q_i)\setminus \Sigma\right)$ for $i$ large enough, and since $\|F_{\Theta_i(s_i)}^+\|_{\infty, h_i} \leq 2,$ we have
    \begin{align*}
       \int_{B_2^h(y) \setminus B_\kappa^h(0)} |F_\Theta^+|_h^2 \, dV_h &\geq \int_{\psi_i^{-1}\left(B_1^{g_i}(q_i)\setminus \Sigma\right) \setminus B_\kappa^h(0)} |F_\Theta^+|_h^2 \, dV_h \\
       &= \lim_{i \to \infty} \int_{\psi_i^{-1}\left(B_1^{g_i}(q_i)\setminus \Sigma\right) \setminus B_\kappa^h(0)} \left|F_{\Theta_i(s_i)}^+\right|_{h_i}^2 \, dV_{h_i} \\
       &= \lim_{i \to \infty} \int_{\psi_i^{-1}\left(B_1^{g_i}(q_i)\setminus \Sigma\right)} \left|F_{\Theta_i(s_i)}^+\right|_{h_i}^2 \, dV_{h_i} - \int_{\psi_i^{-1}\left(B_1^{g_i}(q_i)\setminus \Sigma\right) \cap B_\kappa^h(0)} \left|F_{\Theta_i(s_i)}^+\right|_{h_i}^2 \, dV_{h_i} \\
       &\geq \frac{\kappa}{2} - C\kappa^4 \\
       &\geq \frac{\kappa}{4}
    \end{align*} for $\kappa$ small enough. This establishes (\ref{supbound:finalcapturedenergy}). \par 
    \begin{comment}
        Since $\lim_{i \to \infty} \text{vol}_{g_i}(\Sigma) = 0,$ we have in view of the $L^\infty$ bound (\ref{supbound:boundedoncylinder}) that
    \begin{align*}
        \int_{B_{1}^{h}(y)} |F_{\Theta}^+|_{h}^2 \, dV_{h} = \lim_{i \to \infty} \int_{B_{1}^{g_i}(q_i)} |F_{\Theta_i(s_i)}^+|_{g_i}^2 \, dV_{g_i}.
    \end{align*} Then by (\ref{supbound:capturedenergy}),
    \begin{align}\label{supbound:finalcapturedenergy}
        \int_{B_{1}(y)} |F_{\Theta}^+|^2 \geq \kappa.
    \end{align}
    \end{comment}
    On the other hand, (\ref{supbound:sequencegap}) implies $\|F_{\Theta}^+\|_2^2 < \frac{8\pi^2}{|\Gamma|}.$ As noted in Remark \ref{rmk:gap}, $(M, g)$ and $\R^4/\Gamma$ have the same $\dot{H}^1$ Sobolev constant. Therefore $F_{\Theta}^+ \equiv 0$ by Vieira's gap Theorem \ref{thm:gap}, which contradicts (\ref{supbound:finalcapturedenergy}), as desired. \par
\emph{Subcase 2:} The points $y_i$ escape to infinity. Since the $h_i$ converge to $h$ uniformly smoothly on the end of $U,$ we have the following. There exist $r_i \nearrow \infty$ such that the exponential map
\begin{align*}
    \exp_{y_i} : \mathbf{B}_{r_i} \to B_{r_i}^{h_i}(y_i)
\end{align*} is a diffeomorphism, so we may pull back $E$ and $\theta_i$ via $\psi_i \circ \exp_{y_i}$ to obtain $E_i$ and $\Theta_i$ as before over $(\mathbf{B}_{r_i}, (\psi_i \circ \exp_{y_i})^\ast(g_i)) \times [-1, 0].$ As before, we obtain an Uhlenbeck limit $\Theta,$ now on $\R^4,$ satisfying (\ref{supbound:finalcapturedenergy}). Since $\frac{8\pi^2}{|\Gamma|} \leq 8\pi^2,$ $\Theta$ is anti-self-dual by the elliptic gap theorem, contradicting (\ref{supbound:finalcapturedenergy}). \par Thus Case 1 ends in a contradiction. \par

    \vspace{2mm}
    
\noindent {\bf Case 2:} $\lim_{i \to \infty} \sigma_i = \sigma \in (0, \infty)$. We again have two subcases. \par
\emph{Subcase 1:} The points $q_i$ converge to some point in $M.$ We obtain an Uhlenbeck limit on $(M, \sigma g)$ itself which is anti-self-dual by Theorem \ref{thm:gap}, contradicting (\ref{supbound:finalcapturedenergy}). \par
\emph{Subcase 2:} Otherwise, the $q_i$ escape to infinity. Since the metric $\sigma g$ approaches Euclidean on the end, we obtain an Uhlenbeck limit on $\R^4$ and obtain a contradiction as in Subcase 2 of Case 1. \par
Thus Case 2 ends in a contradiction.
\vspace{2mm}

{\bf Case 3:} $\lim_{i \to \infty} \sigma_i = \infty.$ By the bounded geometry of $(M, g),$ this choice of scaling yields an Uhlenbeck limit on $\R^4$ as in the proof of \cite[Theorem 3.3]{gappaper}, and we reach a contradiction as before. \par

\vspace{2mm} 

    Since Cases 1-3 are exhaustive and we ultimately reach a contradiction, the desired estimate (\ref{supbound:uniformbound}) must hold.
\end{proof}
Next we show that the self-dual energy converges to zero as $t \to \infty.$
\begin{lemma}\label{lemma:energygoestozero}
    Let $\varepsilon > 0.$ There exists $\sigma > 0,$ depending only on $\varepsilon$ and $C_{\ref{thm:supbound}},$ and $C_{\ref{lemma:energygoestozero}} > 0,$ depending only on $|\Gamma|$ and $C_{\ref{thm:supbound}},$ %r, \gamma,$ and $(M,g),$ 
    such that the following holds. Suppose $A(t)$ is a solution of (\ref{ymf}) satisfying the hypotheses of Theorem \ref{thm:supbound}. If for some $x \in M$ and $R > 0,$
    \begin{align*}
        \int_{M \setminus B_R(x)} |F^+(0)|^2 \leq \varepsilon,
    \end{align*} then for all $t \geq \sigma R^2$
    \begin{align*}
        \int_M |F^+(t)|^2 \leq 2\varepsilon,
    \end{align*}
    and for all $t \geq 2\sigma R^2$
        \begin{align*}
        \|F^+(t)\|_{L^\infty(M)} \leq \frac{C_{\ref{lemma:energygoestozero}} \sqrt{\varepsilon}}{t}.
    \end{align*}
\end{lemma}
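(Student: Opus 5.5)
The plan is to split $M$ into the region near $x$, where the uniform bound of Theorem \ref{thm:supbound} makes $F^+$ small in $L^2$ after time $\sigma R^2$, and the region far from $x$, where the local energy inequality of Lemma \ref{lemma:energyineq} keeps the self-dual energy at most about $\varepsilon$.

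\textbf{Step 1: energy stays small outside a large ball.} Take $\varphi$ to be a cutoff equal to $1$ outside $B_{KR}(x)$ and $0$ on $B_R(x)$, with $|\nabla\varphi|\le C/(KR)$ for a large $K$ chosen below. Strictly, $\varphi$ is not compactly supported. So apply Lemma \ref{lemma:energyineq} to $\varphi\psi_n$ with the exhausting cutoffs $\psi_n$ from the proof of Lemma \ref{lemma:energyidentity}, and pass to the limit using finite energy and $\int_0^\infty\!\int|D^*F|^2<\infty$ from (\ref{energyidentity:identity}). Absorbing the cross term by Peter--Paul as in Lemma \ref{lemma:energynotsmall} gives
\begin{align*}
\int \varphi^2|F^+(t)|^2 \le \int\varphi^2|F^+(0)|^2 + 8\int_0^t\!\!\int |\nabla\varphi|^2|F^+|^2 .
\end{align*}
The first term on the right is at most $\varepsilon$. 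The second is at most $\frac{C}{K^2R^2}\int_0^t\|F^+(s)\|_2^2\,ds \le \frac{C t}{K^2R^2}\cdot\frac{8\pi^2}{|\Gamma|}$. For $t\le T:=\sigma R^2$, choosing $K^2\ge C\sigma/\varepsilon$ makes this at most $\varepsilon/2$. Hence $\int_{M\setminus B_{KR}}|F^+(t)|^2\le \frac32\varepsilon$ on $[0,\sigma R^2]$.

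\textbf{Step 2: energy inside the ball becomes small.} By Theorem \ref{thm:supbound} and volume growth $\mathrm{vol}(B_{KR})\le C K^4R^4$ (ALE),
\begin{align*}
\int_{B_{KR}}|F^+(t)|^2 \le \|F^+(t)\|_\infty\,\|F^+(t)\|_1\big|_{B_{KR}} \le \frac{C_{\ref{thm:supbound}}}{t}\,(\mathrm{vol} B_{KR})^{1/2}\Big(\frac{8\pi^2}{|\Gamma|}\Big)^{1/2}\le \frac{C K^2R^2}{t}.
\end{align*}
At $t=\sigma R^2$ this is $CK^2/\sigma\sim C/\varepsilon$, which does not close. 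This circularity between $K$ and $\sigma$ is the main obstacle. The natural fix is to evaluate at the time $t_0=\sigma R^2$ with $\sigma$ large, but to apply Step 1 only at scale $K$ with $K^2$ comparable to $\sigma$. Then use the decay in time of $\int_0^t\|F^+\|_2^2$ rather than the crude bound.

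Concretely, I would instead bound $\int_0^t\!\int|\nabla\varphi|^2|F^+|^2$ using the pointwise bound directly. For $s\ge s_0$ this gives $\int|\nabla\varphi|^2|F^+|^2\le \frac{C}{K^2R^2}\cdot\frac{C_{\ref{thm:supbound}}}{s}\cdot\|F^+\|_1\big|_{\mathrm{supp}\nabla\varphi}\lesssim \frac{C}{s}$, which integrates to only a logarithm, $\log(t/R^2)$. I will therefore use a dyadic-in-space cutoff, taking $|\nabla\varphi|\lesssim \delta/\mathrm{dist}$ logarithmically over scales $R$ to $R e^{1/\delta}$, so that $\int|\nabla\varphi|^2|F^+|^2\lesssim \delta^2\cdot\frac{C}{s}\log$. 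Choosing $\delta$ small in terms of $\varepsilon$ and $\log\sigma$ closes Step 1. Then Step 2 at $t=\sigma R^2$ with $\sigma\gg e^{2/\delta}$ gives the inner energy $\le\varepsilon/2$, so the total is $\le 2\varepsilon$ at $t=\sigma R^2$.

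\textbf{Step 3: all later times and the sup bound.} For $t\ge\sigma R^2$ the bound persists, since $\int_M|F^+(t)|^2$ is nonincreasing by (\ref{energyidentity:identity}). For the $L^\infty$ estimate at $t\ge 2\sigma R^2$, apply Lemma \ref{lemma:epsreg} with $p=2$ and radius $\sqrt{t/2}$ around any point, on the time interval $[t/2,t]\subset[\sigma R^2,\infty)$. Shrinking $\varepsilon$ below $\varepsilon_0^2/2$ if necessary (otherwise the claim follows from Theorem \ref{thm:supbound} with $C_{\ref{lemma:energygoestozero}}$ large), we get $\|F^+(t)\|_\infty\le C(t/2)^{-1}\sqrt{2\varepsilon}$, as required.
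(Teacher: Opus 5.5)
\textbf{Gap in Step 1.} The circularity you flagged in Step 1 is real, and your proposed fix does not remove it. On each dyadic shell you bound $\int|\nabla\varphi|^2|F^+(s)|^2$ by $\|F^+(s)\|_\infty\|F^+(s)\|_{L^1}\lesssim C/s$, which is not integrable in time. The log cutoff from $R$ to $L=Re^{1/\delta}$ has $\sim 1/\delta$ shells with $|\nabla\varphi|\le\delta/r$, so the accumulated gradient term up to time $\sigma R^2$ is of size $\delta^2\cdot\delta^{-1}\log\sigma=\delta\log\sigma$. Step 2 on $B_L$, however, forces $\log\sigma\gtrsim 2/\delta$. Hence $\delta\log\sigma\gtrsim 2$, and the outer term is not small. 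Your prescription ``choose $\delta$ small in terms of $\log\sigma$, then take $\sigma\gg e^{2/\delta}$'' is circular as written. Sharper bookkeeping, using Cauchy--Schwarz across all shells, gives $\delta^{3/2}\log\sigma\sim\delta^{1/2}$ and would barely close, but that is not the estimate you state.

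\textbf{The missing idea.} Use the \emph{square} of the sup bound rather than sup times $L^1$. Take $U=B_{2\rho}(x)\setminus B_\rho(x)$ with $|\nabla\varphi|\le 1/\rho$. Volume comparison and (\ref{supbound:uniformbound}) give $\int_U|F^+(s)|^2\le C\rho^4C_{\ref{thm:supbound}}^2 s^{-2}$ for $s\ge\rho^2$, which is integrable in time. Combined with the energy bound on $[0,\rho^2]$, this yields $\int_0^\infty\!\int|\nabla\varphi|^2|F^+|^2\le C\bigl(8\pi^2/|\Gamma|+C_{\ref{thm:supbound}}^2\bigr)$, uniformly in $T$ and independent of $\rho$.

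\textbf{How the paper gets smallness.} It does not use Peter--Paul absorption; it uses the tension. Since $\int_0^\infty\!\int_M|D^*F|^2\le 8\pi^2/|\Gamma|$, pigeonholing over $\sim\delta^{-2}$ dyadic annuli gives some $\rho=2^jR$ with $\int_0^\infty\!\int_U|D^*F|^2\le\delta^2$. Keeping the cross term in (\ref{energyineq:ineq}) as a product then bounds it by $CC_{\ref{thm:supbound}}\delta$. This gives $\int_{M\setminus B_{2\rho}(x)}|F^+(T)|^2\le\frac32\varepsilon$ for \emph{every} $T\ge 0$, with $\rho\le 2^{C\delta^{-2}}R$ fixed before $\sigma$ is chosen. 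Only afterwards is $\sigma$ taken large enough that $C\rho^4C_{\ref{thm:supbound}}^2T^{-2}\le\varepsilon/2$ on $B_{2\rho}(x)$, so no circularity arises.

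With the $s^{-2}$ bound, your log cutoff would also work. Each shell would contribute $O(1)$ uniformly in time, for a total of $O(\delta)$.

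\textbf{Step 3.} Step 3 is fine. Monotonicity of $\|F^+\|_2$ from (\ref{energyidentity:identity}), followed by Lemma \ref{lemma:epsreg} at scale $\sqrt{t/2}$, is a valid substitute for the paper's Moser iteration.
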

\begin{proof}
    First we estimate the self-dual energy in the complement of a large ball. Set 
    \begin{align*}
        \delta := \frac{\varepsilon}{4CC_{\ref{thm:supbound}}}.
    \end{align*} Since
    \begin{align*}
        \int_0^\infty \int_M |D^\ast F|^2 \leq \frac{8\pi^2}{|\Gamma|},
    \end{align*} there exists an integer
    \begin{align*}
        j \in [0, 8\pi^2(\delta^2|\Gamma|)^{-1}+1]
    \end{align*} such that, denoting
    \begin{align*}
        \rho &:= 2^j R \\
        U &:= B_{2\rho}(x) \setminus B_{\rho}(x),
    \end{align*} we have
    \begin{align*}
        \int_0^\infty \int_U |D^\ast F|^2 \leq \delta^2.
    \end{align*} Define a function $\varphi$ by
    \begin{align*}
        \varphi(y) :=
        \begin{cases}
            0 & 0 \leq \text{dist}(y, x) \leq \rho \\
            \frac{\text{dist}(y, x) - \rho}{\rho} & \rho \leq \text{dist}(y, x) \leq 2\rho \\
            1 & 2\rho \leq \text{dist}(y, x).
        \end{cases}
    \end{align*} Our assumptions on $A(t)$ allow us to take (\ref{energyineq:ineq}) with this choice of $\varphi,$ yielding for all $T \geq 0$
    \begin{align*}
        \int_{M \setminus B_{2\rho}(x)} |F^+(T)|^2 \leq \int_{M \setminus B_{\rho}(x)} |F^+(0)|^2 + 4\left(\int_0^T \int_U |D^\ast F|^2\right)^{\frac{1}{2}}\left(\int_0^T \int_U |\nabla \varphi|^2 |F^+|^2\right)^{\frac{1}{2}}.
    \end{align*} By (\ref{supbound:uniformbound}),
    \begin{align*}
        \int_0^T \int_U |\nabla \varphi|^2 |F^+|^2 &= \int_0^{\rho^2} \int_U |\nabla \varphi|^2 |F^+|^2 + \int_{\rho^2}^T \int_U |\nabla \varphi|^2 |F^+|^2 \\
        &\leq \frac{1}{\rho^2} \left(\int_0^{\rho^2} \int_U |F^+|^2 + \text{Vol}(B_{2\rho}(x))\int_{\rho^2}^T  \frac{C_{\ref{thm:supbound}}^2}{t^2} \, dt \right) \\
        &\leq 
        \frac{1}{\rho^2}\left(\rho^2 \frac{8\pi^2}{|\Gamma|} + C (2\rho)^4 C_{\ref{thm:supbound}}^2 \frac{1}{\rho^2}\right)  \\
        &\leq CC_{\ref{thm:supbound}}^2.
    \end{align*} Hence
    \begin{align*}
        \int_{M \setminus B_{2\rho}(x)} |F^+(T)|^2 \leq \varepsilon + CC_{\ref{thm:supbound}}\delta \leq \frac{3}{2}\varepsilon.
    \end{align*} On the other hand, if
    \begin{align*}
        T \geq \frac{8CC_{\ref{thm:supbound}}}{\sqrt{\varepsilon}}\rho^2
    \end{align*} then (\ref{supbound:uniformbound}) yields
    \begin{align*}
        \int_{B_{2\rho}(x)} |F^+(T)|^2 \leq C(2\rho)^4\frac{C_{\ref{thm:supbound}}^2}{T^{2}} \leq \frac{\varepsilon}{2}.
    \end{align*} Thus by definition of $\rho,$ the lemma holds with
    \begin{align*}
        \sigma = \frac{8CC_{\ref{thm:supbound}}}{\sqrt{\varepsilon}}2^{\frac{16\pi^2}{\delta^2|\Gamma|}+2}.
    \end{align*} \par
    The $L^\infty$ bound of this lemma now follows from parabolic Moser iteration, since by (\ref{supbound:uniformbound}) we have that $u := |F^+|$ satisfies
    \begin{align*}
        \partial_t u - \Delta u \leq Cu^2 \leq \frac{CC_{\ref{thm:supbound}}}{t} u 
    \end{align*} for $t > 0.$
\end{proof}

\begin{proof}[Proof of Theorem \ref{thm:main}$a$]
Since $F^+(0) \in L^2,$ we have
\begin{align*}
    \lim_{R \to \infty} \int_{M \setminus B_R(x)} |F^+(0)|^2 = 0
\end{align*}
for any $x \in M.$ 
Then Lemma \ref{lemma:energygoestozero} implies $\lim_{t \to \infty} \left(\|F^+(t)\|_2 + t\|F^+(t)\|_\infty\right) = 0.$
\end{proof}

\vspace{10mm}

\section{Improved decay for $F^+ \in L^p, p \in (1, 2)$}

In this section, we prove the quantitative version of Theorem \ref{thm:main}$b.$ %, we show that if we further have $F^+(0) \in L^p$ for some $p \in (1, 2),$ then the decay of $\|F^+(t)\|_\infty$ improves to $O\left(t^{-\frac{2}{p}}\right)$ for large $t.$
The argument has two parts: we first show that once $\|F^+(t)\|_2$ is small, $\|F^+(t)\|_p$ remains bounded and thus $\|F^+(t)\|_\infty$ has improved decay. We then show that there is a uniform time after which $\|F^+(t)\|_2$ is small, so that $\|F^+(t)\|_p$ thereafter remains bounded.
\begin{comment}
In this section, we show that if we further have $F^+(0) \in L^p$ for some $p \in (1, 2),$ then the decay of $\|F^+(t)\|_\infty$ improves to $O(t^{-\frac{2}{p}})$ for large $t.$ The argument has two parts: we show that if $\|F^+(t)\|_p$ is bounded, then $\|F^+(t)\|_\infty$ has improved decay, and we show that there is a uniform time after which $\|F^+(t)\|_p$ remains bounded.
\end{comment}
\begin{lemma}\label{lemma:Lpcontrolafter}
Let $A(t)$ be a solution of (\ref{ymf}), and let $p > 1$ and $t_0 \geq 0.$ There exists $\varepsilon_p > 0,$ depending only on $p$ and $|\Gamma|,$ such that if 
\begin{align*}
    \|F^+(t_0)\|_2 \leq \eps_p,
\end{align*} then
$$\|F^+(t)\|_{p} \leq \|F^+(t_0)\|_p$$
for $t \geq t_0.$
%monotonically decreasing.
\end{lemma}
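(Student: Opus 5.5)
We will prove the lemma with an $L^p$ energy estimate for $u := |F^+|$, based on the differential inequality (\ref{epsreg:diffineq}), $\partial_t u - \Delta u \leq C u^2$. This inequality holds globally on $M$ because the scalar and self-dual Weyl curvatures vanish. The plan is to compute $\frac{d}{dt}\int u^p$, absorb the cubic term using the Sobolev inequality (\ref{sobolevineq:ineq}) together with smallness of $\|u\|_2$, and close by a continuity argument in time.

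\textbf{Step 1: the $L^p$ differential inequality.} We multiply (\ref{epsreg:diffineq}) by $p u^{p-1}$; we will work with $(u^2+\delta)^{\cdot}$ regularizations if needed, since $u$ may vanish. Integrating by parts gives
\begin{align*}
\frac{d}{dt}\int_M u^p + \frac{4(p-1)}{p}\int_M \left|\nabla u^{\frac p2}\right|^2 \leq Cp\int_M u^{p+1}.
\end{align*}
The integration by parts is not automatic on noncompact $M$: we first insert cutoffs $\varphi_n$ as in the proof of Lemma \ref{lemma:energyidentity}. The error terms $\int |\nabla\varphi_n|^2 u^p$ are then discarded as $n\to\infty$, using the $L^\infty$ and $L^2$ bounds on $F^+$ (so $u^p \in L^1$ whenever $\|F^+(t)\|_p<\infty$) and finiteness of $\int_{t_0}^t\int |D^*F|^2$. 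We may assume $\|F^+(t_0)\|_p<\infty$, since otherwise there is nothing to prove.

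\textbf{Step 2: absorbing the cubic term.} Set $f = u^{p/2}$. By H\"older's inequality,
\[
\int u^{p+1} = \int u\, f^2 \leq \|u\|_2 \|f\|_4^2,
\]
and (\ref{sobolevineq:ineq}) gives
\[
\|f\|_4^2 \leq \frac{(6|\Gamma|)^{1/2}}{8\pi}\|\nabla f\|_2^2 .
\]
Hence, provided
\begin{align*}
Cp\,\|u(t)\|_2\,\frac{(6|\Gamma|)^{\frac12}}{8\pi} \leq \frac{4(p-1)}{p},
\end{align*}
the right-hand side is absorbed and $\frac{d}{dt}\int u^p \leq 0$. This requires $\|u(t)\|_2 \leq \varepsilon_p$, with $\varepsilon_p$ depending only on $p$ and $|\Gamma|$. (As $p\searrow1$ the constant degenerates, which is why $\varepsilon_p$ depends on $p$.)

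\textbf{Step 3: keeping $\|u(t)\|_2$ small for all $t\geq t_0$.} This is supplied directly by the global energy identity (\ref{energyidentity:identity}): $\|F^+(t)\|_2$ is nonincreasing in $t$. Thus $\|F^+(t)\|_2 \leq \|F^+(t_0)\|_2 \leq \varepsilon_p$ for all $t\geq t_0$, and no bootstrap is needed. Integrating $\frac{d}{dt}\int u^p\leq 0$ then yields the claim.

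The main obstacle is making Step 1 rigorous: justifying the cutoff and limiting argument on $M$, and handling the non-smoothness of $u^{p-1}$ where $u=0$ when $p<2$. For the latter we will use $u_\delta = (u^2+\delta^2)^{1/2}$ and let $\delta\to0$. If the direct cutoff argument is problematic because $\int u^p$ is not a priori finite for $t>t_0$, we would instead run the estimate with $\varphi_n^2 u^p$ and absorb the boundary term $\int |\nabla\varphi_n|^2 u^p$ via a Gr\"onwall-type bound on compact time intervals, then send $n\to\infty$.
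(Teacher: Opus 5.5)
Your proposal is correct and is essentially the paper's proof: multiply $\partial_t u-\Delta u\le Cu^2$ by $pu^{p-1}$, bound $\int u^{p+1}\le\|u\|_2\|u^{p/2}\|_4^2$, and absorb this via the Sobolev inequality (\ref{sobolevineq:ineq}), using $\|u(t)\|_2\le\varepsilon_p$ for $t\ge t_0$ from the monotonicity in (\ref{energyidentity:identity}), which the paper uses tacitly (the paper also takes $\varepsilon_p$ half as large so as to retain the term $\int_{t_0}^t\|u^p\|_2$, which it needs later in Lemma \ref{lemma:improveddecay}). On the cutoff issue, which the paper does not address, you need neither a Gr\"onwall argument nor a priori finiteness of $\|F^+(t)\|_p$: by H\"older and Euclidean volume growth the error term satisfies $\int|\nabla\varphi_n|^2u^p\lesssim n^{-2}\,\text{Vol}(B_{2n})^{1-\frac p2}\|u\|_2^p=O(n^{2-2p})\to0$ for $1<p\le 2$.
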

\begin{proof}
    Denote
    \begin{align*}
        u := |F^+|.
    \end{align*} Recall that $u$ satisfies
    \begin{align*}
        \partial_t u - \Delta u \leq Cu^2
    \end{align*}
    We multiply the inequality by $pu^{p-1},$ integrate over $M$, and then integrate by parts to obtain
    \begin{align}\label{Lpcontrolafter:firststepofMoser}
        \int \partial_t(u^p) + \frac{4(p-1)}{p}\int \left|\nabla u^{\frac{p}{2}}\right|^2 \leq pC\int u^{p+1}.
    \end{align} By the Sobolev inequality (\ref{sobolevineq:ineq}),
    \begin{align*}
        \frac{8\pi}{(6|\Gamma|)^{\frac{1}{2}}}\left(\int u^{2p}\right)^{\frac{1}{2}}  \leq \int \left|\nabla\left(u^{\frac{p}{2}}\right)\right|^2.
    \end{align*} By H{\"o}lder's inequality, we have for $t \geq t_0$ that
    \begin{align*}
        \int u^{p+1} \leq \|u\|_2\|u^p\|_2 \leq \varepsilon_p\left(\int u^{2p}\right)^{\frac{1}{2}}.
    \end{align*} Thus if
    \begin{align*}
        \varepsilon_p \leq \frac{16\pi(p-1)}{p^2C(6|\Gamma|)^{\frac{1}{2}}},
    \end{align*} then for $t \geq t_0$
    \begin{align*}
        \frac{d}{dt} \int u^p + \frac{16\pi(p-1)}{p(6|\Gamma|)^{\frac{1}{2}}}\left(\int u^{2p}\right)^{\frac{1}{2}} \leq 0. %,
    \end{align*}% so it follows that the $L^p$ norm of $F^+$ is nonincreasing. \par
    Integrating in time yields for $t \geq t_0$
    \begin{align}\label{Lpcontrolafter:monotone}
        \int u^p(t) + \frac{16\pi(p-1)}{p(6|\Gamma|)^{\frac{1}{2}}}\int_{t_0}^t \left(\int u^{2p}\right)^{\frac{1}{2}} \leq \int u^p(t_0),
    \end{align} which in particular implies the lemma.
\end{proof}
\begin{lemma}\label{lemma:improveddecay}
    Fix $K \geq 0.$ Let $A(t)$ be a solution of (\ref{ymf}) satisfying the hypotheses of Lemma \ref{lemma:Lpcontrolafter} and satisfying 
    \begin{align}\label{improveddecay:supbound}
        \|F^+(t) \|_\infty \leq \frac{K}{t}
    \end{align} for $t \geq t_0.$  There exists $C_{\ref{lemma:improveddecay}} > 0,$ depending only on $p, |\Gamma|,$ and $K,$ such that for $t \geq 2t_0,$
    \begin{align*}
        \|F^+(t)\|_\infty \leq C_{\ref{lemma:improveddecay}}\|F^+(t_0)\|_p t^{-\frac{2}{p}}.
    \end{align*}
    Moreover,
    \begin{align*}
        \lim_{t \to \infty} \left(\|F^+(t)\|_p + t^{\frac{2}{p}}\|F^+(t)\|_\infty\right) = 0.
    \end{align*}
    \begin{comment}
        Moreover,
    \begin{align*}
        \|F^+(t)\|_\infty = o\left(t^{-\frac{2}{p}}\right)
    \end{align*} as $t \to \infty.$
    \end{comment}
\end{lemma}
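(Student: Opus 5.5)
The plan is to combine the monotonicity of $\|F^+\|_p$ from Lemma \ref{lemma:Lpcontrolafter} with a local parabolic Moser iteration at scale $\sqrt{t}$, using (\ref{improveddecay:supbound}) to make the potential term in the differential inequality harmless.

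\textbf{Step 1 (linear inequality).} Set $u := |F^+|$. By (\ref{epsreg:diffineq}) and (\ref{improveddecay:supbound}), for $t \geq t_0$,
\begin{align*}
    \partial_t u - \Delta u \leq C u^2 \leq \frac{CK}{t}\, u.
\end{align*}

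\textbf{Step 2 (Moser iteration on a cylinder).} Fix $t \geq 2t_0$, a point $x \in M$, and $R := \sqrt{t/2}$. On the cylinder $B_R(x) \times [t - R^2, t]$, all times are at least $t/2 \geq t_0$, so the coefficient satisfies $CK/s \leq 2CK/t = CK R^{-2}$. Under the rescaling $g \to R^{-2} g$ used in Lemma \ref{lemma:epsreg}, this becomes a bounded potential, which contributes only a $K$-dependent constant.

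Parabolic Moser iteration then applies, using only the scale-invariant Sobolev inequality (\ref{sobolevineq:ineq}) as in \cite[Theorem 19.1]{ligeomanalysis}, exactly as in the last step of the proof of Lemma \ref{lemma:epsreg}. It gives
\begin{align*}
    u(x,t) \leq \frac{C_{p,|\Gamma|,K}}{R^{4/p}} \sup_{t - R^2 \leq s \leq t} \|u(s)\|_{L^p(B_R(x))} \leq C\, t^{-\frac{2}{p}} \|F^+(t_0)\|_p .
\end{align*}
The last inequality uses Lemma \ref{lemma:Lpcontrolafter}, whose hypothesis $\|F^+(t_0)\|_2 \leq \eps_p$ is assumed. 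Taking the supremum over $x$ gives the first claim.

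\textbf{Step 3 (the limit).} The main obstacle is showing $\|F^+(t)\|_p \to 0$, since monotonicity alone only gives boundedness. I would use (\ref{Lpcontrolafter:monotone}), which shows
\begin{align*}
    \int_{t_0}^\infty \|u\|_{2p}^p \, dt < \infty .
\end{align*}

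Interpolating $L^p$ between $L^2$ and $L^{2p}$ is not directly possible, since $p < 2 < 2p$ puts $p$ outside that range. So instead I would run a "tail" argument analogous to Lemma \ref{lemma:energygoestozero}. Given $\eta > 0$, split $F^+(t_1)$ at some large time $t_1$ by a cutoff into a part with small $L^p$ norm outside a ball $B_\rho$ and a compactly supported part.

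Inside $B_\rho$, the $L^\infty$ decay $o(1/t)$ from Theorem \ref{thm:main}$a$ gives $\|u\|_{L^p(B_\rho)} \leq C \rho^{4/p}\, o(1/t) \to 0$. Outside $B_{2\rho}$, repeat the computation of Lemma \ref{lemma:Lpcontrolafter} with a cutoff $\varphi^2 u^{p-1}$ that vanishes on $B_\rho$ and equals $1$ outside $B_{2\rho}$. The small-$L^2$ absorption still works there. The gradient error term is bounded by
\begin{align*}
    \rho^{-2} \int_{t_1}^{T} \int_{U} u^p ,
\end{align*}
where $U = B_{2\rho} \setminus B_\rho$. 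Using $u \leq K/s$ and $\mathrm{Vol}(U) \leq C\rho^4$, this is at most $C K^p \rho^2 t_1^{1-p}$. This is small if $\rho$ is chosen first, for the tail of $F^+(t_1)$ to be small, and then $t_1$ is taken large. I would check that the choices can be made in the right order; if needed, I would fix $\rho$ from the tail of $F^+(t_0)$ propagated by the same localized inequality.

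Together these give $\|F^+(t)\|_p \to 0$. Applying the first estimate with $t_0$ replaced by $t/2 \to \infty$ then yields $t^{2/p}\|F^+(t)\|_\infty \leq C \|F^+(t/2)\|_p \to 0$.
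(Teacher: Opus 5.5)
Steps 1--2 are correct and give $\|F^+(t)\|_\infty\le C\|F^+(t_0)\|_p\,t^{-2/p}$. However, bounding the Moser average by $\sup_s\|u(s)\|_p$ only gives $O(t^{-2/p})$. The paper instead uses H\"older to bound $\int_{t/2}^t\int_{B_{2\sqrt t}}u^p$ by $Ct\int_{t/2}^t\|u\|_{2p}^p\,ds$. This is $t$ times the tail of the convergent integral in (\ref{Lpcontrolafter:monotone}), so $t^{2/p}\|F^+(t)\|_\infty\to 0$ comes for free. In your write-up that limit rests entirely on Step 3.

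\textbf{The gap is in Step 3.}

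\emph{The order of choices does not close.} Your error bound $CK^p\rho^2t_1^{1-p}$ is small only for $t_1\gg\rho^{2/(p-1)}$; even using $u\lesssim s^{-2/p}$ you still need $t_1\gg\rho^2$. You also need $\int_{M\setminus B_\rho}u^p(t_1)$ small, with $\rho$ fixed first. But by a time $t_1\gg\rho^2$ the mass that started in $B_\rho$ can have left it. So this requirement is essentially the statement $\|u(t_1)\|_p\to0$ that you are trying to prove.

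\emph{Propagating the tail from $t_0$ does not help.} The annulus term $\rho^{-2}\int_{t_0}^{t_1}\int_U u^p$ is scale-critical. The best available bounds are $\int_U u^p\le\|u(t_0)\|_p^p$ for $s\lesssim\rho^2$ and $u\lesssim s^{-2/p}$ afterwards. These only make the term $O(\|u(t_0)\|_p^p)$, not small.

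\emph{Why the analogy fails.} The comparison with Lemma \ref{lemma:energygoestozero} breaks because you absorbed the cross term by Peter--Paul. There, smallness came from one factor of a Cauchy--Schwarz product, made small by pigeonholing over dyadic annuli. Moreover, $F^+\in L^p$ alone gives no rate, so some genuine dispersion mechanism is needed.

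\emph{How the paper supplies it.} For $t\ge 2t_0$ the potential $Cu\lesssim t^{-2/p}$ is integrable in time. An integrating factor therefore makes $u$ a subsolution of the heat equation, and $u(t)\le C\,H_{t-2t_0}u(2t_0)$. Split $u(2t_0)=\mathbf{1}_{B_R}u(2t_0)+(1-\mathbf{1}_{B_R})u(2t_0)$. The first piece lies in $L^1$ and decays in $L^p$ by (\ref{improveddecay:semigroup}). The second is small in $L^p$ by contraction.

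\emph{A local repair of your approach.} Alternatively, you can make your localized argument work as follows.
\begin{itemize}
\item Keep the cross term in product form, $4\bigl(\int\!\int_U|\nabla u^{p/2}|^2\bigr)^{1/2}\bigl(\rho^{-2}\int\!\int_U u^p\bigr)^{1/2}$. The second factor is bounded independently of $\rho$ by the estimate above.
\item Pigeonhole over dyadic annuli to make the first factor small. This uses $\int_{t_0}^\infty\int_M|\nabla u^{p/2}|^2\lesssim\|u(t_0)\|_p^p$, which follows from (\ref{Lpcontrolafter:firststepofMoser}) after absorbing the nonlinear term.
\item Handle $\int\varphi^2u^{p+1}\le\|u\|_\infty\int\varphi^2u^p$ by Gr\"onwall, since $\|u\|_\infty\lesssim s^{-2/p}$ is integrable in time.
\end{itemize}
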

\begin{proof}
    By (\ref{improveddecay:supbound}), $u$ satisfies
    \begin{align*}
        \partial_t u - \Delta u \leq \frac{CK}{t}u.
    \end{align*} Thus by parabolic Moser iteration, there exists $C_0 > 0,$ depending only on $p, |\Gamma|,$ and $K,$ such that for all $(x, t) \in M \times (0, \infty)$
    \begin{align*}
        \|u(t)\|_{L^\infty\left(B_{\sqrt{t}}(x)\right)} \leq C_0t^{-\frac{3}{p}} \left(\int_{t/2}^t \int_{B_{2\sqrt{t}}(x)} u^p\right)^{\frac{1}{p}}.
    \end{align*} By H{\"o}lder's inequality,
    \begin{align*}
        \int_{t/2}^t \int_{B_{2\sqrt{t}}(x)} u^p &\leq \sqrt{\text{Vol}(B_{2\sqrt{t}}(x))}\int_{t/2}^t \left(\int_{B_{2\sqrt{t}}(x)} u^{2p}\right)^{\frac{1}{2}} \\
        &\leq Ct \int_{t/2}^t \left(\int_{B_{2\sqrt{t}}(x)} u^{2p}\right)^{\frac{1}{2}}. 
    \end{align*} By (\ref{Lpcontrolafter:monotone}),
    \begin{align*}
        \int_{t_0}^\infty\left( \int u^{2p}\right)^{\frac{1}{2}} \leq \|u(t_0)\|_p^p,
    \end{align*} and thus we further have
    \begin{align*}
    \lim_{t \to \infty} \int_{t/2}^t\left(\int_{B_{2\sqrt{t}}(x)} u^{2p}\right)^{\frac{1}{2}} = 0.
    \end{align*} The $L^\infty$ estimates of the lemma thus follow. \par 
    We next show that $\lim_{t \to \infty}\|u(t)\|_p = 0.$ For $t \geq 2t_0$, we now have the differential inequality
    \begin{align*}
        \partial_t u - \Delta u \leq CC_{\ref{lemma:improveddecay}}t^{-\frac{2}{p}} u.
    \end{align*} Hence the function
    \begin{align*}
        v := e^{\frac{pCC_{\ref{lemma:improveddecay}}}{2-p}\left(t^{1-\frac{2}{p}}-(2t_0)^{1-\frac{2}{p}}\right)} u
    \end{align*} satisfies 
    \begin{align*}
        \partial_t v - \Delta v \leq 0
    \end{align*} and 
    \begin{align*}
        v(2t_0) = u(2t_0).
    \end{align*} Let $\tilde{v}$ denote the semigroup solution of the heat equation starting at time $2t_0$ with initial data $u(2t_0).$ Then by the maximum principle, $v(t) \leq \tilde{v}(t),$ so we have
    \begin{align*}
        u(t) \leq e^{\frac{pCC_{\ref{lemma:improveddecay}}}{2-p}(2t_0)^{1-\frac{2}{p}}} \tilde{v}(t).
    \end{align*} \par
    We now use that the heat semigroup $H_t$ on $(M, g)$ satisfies for $1 \leq q \leq r \leq \infty$ and $t > 0$ 
    \begin{align}\label{improveddecay:semigroup}
        \|H_t(f)\|_r \leq C_{|\Gamma|} t^{-2\left(\frac{1}{q} - \frac{1}{r}\right)}\|f\|_q.
    \end{align} To see this, note that we have
    \begin{align*}
        \|H_t(f)\|_r \leq \|f\|_r
    \end{align*} since the heat semigroup is a contraction on $L^r,$ and we have
    \begin{align*}
        \|H_t(f)\|_\infty \leq C_{|\Gamma|}t^{-\frac{2}{q}}\sup_{t/2 \leq s \leq t} \|H_s(f)\|_q \leq C_{|\Gamma|}t^{-\frac{2}{q}} \|f\|_q
    \end{align*} by parabolic Moser iteration. The general bound (\ref{improveddecay:semigroup}) now follows by interpolation via H{\"o}lder's inequality. \par
    Fixing $x \in M$ and letting $\varphi$ be the indicator function for $B_R(x),$ we thus have
    \begin{align*}
        \|\tilde{v}(t)\|_p &\leq \|H_{t-2t_0}(\varphi\tilde{v}(2t_0))\|_p + \|H_{t-2t_0}((1-\varphi)\tilde{v}(2t_0))\|_p \\
        &\leq C_{|\Gamma|}(t-2t_0)^{-2\left(1-\frac{1}{p}\right)}\|\varphi\tilde{v}(2t_0)\|_1 + \|(1-\varphi)\tilde{v}(2t_0)\|_p \\
        &\leq C_{|\Gamma|}(t-2t_0)^{-2\left(1-\frac{1}{p}\right)}(CR^4)^{1-\frac{1}{p}}\|\tilde{v}(2t_0)\|_p + \|(1-\varphi)\tilde{v}(2t_0)\|_p.
    \end{align*} Since $\tilde{v}(2t_0) \in L^p,$ we may choose $R$ large enough to make the second term small, and then choose $t$ large enough, depending on $R, |\Gamma|,$ and $\|\tilde{v}(2t_0)\|_p,$ to make the first term small. We deduce that $\|\tilde{v}(t)\|_p \to 0,$ so the lemma follows.
    %By standard properties {\color{red}?} of the heat semigroup on an ALE space,
    %\begin{align*}
    %    \|\tilde{v}(t)\|_p \to 0,
    %\end{align*} so the lemma follows.
\end{proof}
We now work towards showing that $\|F^+(t)\|_2$ is small after a uniform time. First we show that $\|F^+(t)\|_p$ remains locally bounded.
\begin{lemma}\label{lemma:Lpcontrolbefore}
    Fix $E_0, K \geq 0$ and $p \in (1, 2).$ Suppose $A(t)$ is a solution of (\ref{ymf}) satisfying 
    \begin{align}
        \|F^+(t)\|_2 &\leq E_0 \label{Lpcontrolbefore:energybound}\\
        \|F^+(t)\|_\infty &\leq \frac{K}{t} \label{Lpcontrolbefore:supbound}
    \end{align} for $t \geq 0.$ Then    \begin{align}\label{Lpcontrolbefore:bound}
        \|F^+(t)\|_p^p \leq \|F^+(0)\|_p^p + C_{\ref{lemma:Lpcontrolbefore}}t^{2-p}
    \end{align} for $t \geq 0,$ where $C_{\ref{lemma:Lpcontrolbefore}} > 0$ depends only on $p, E_0,$ and $K.$
\end{lemma}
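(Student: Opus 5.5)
The bound will come from the same differential inequality used in Lemma \ref{lemma:Lpcontrolafter}. The difference is that the cubic term is no longer absorbed by smallness of $\|F^+\|_2$; instead we absorb it using the pointwise bound (\ref{Lpcontrolbefore:supbound}) and the energy bound (\ref{Lpcontrolbefore:energybound}).

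With $u := |F^+|$, recall that $\partial_t u - \Delta u \leq Cu^2$. Multiplying by $pu^{p-1}$ and integrating by parts gives (\ref{Lpcontrolafter:firststepofMoser}). Discarding the nonnegative gradient term, we get
\begin{align*}
    \frac{d}{dt}\int u^p \leq pC\int u^{p+1}.
\end{align*}
Since $p < 2$, we cannot bound $\int u^{p+1}$ by $\|u\|_\infty\int u^p$ and close a Gronwall argument: that would only give $\frac{d}{dt}\int u^p \leq \frac{CK}{t}\int u^p$, which is not integrable at $t=0$ and gives polynomial growth $t^{CK}$ rather than $t^{2-p}$. Instead we interpolate between $L^\infty$ and $L^2$. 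Because $p+1 \geq 2$, we have
\begin{align*}
    \int u^{p+1} \leq \|u\|_\infty^{p-1}\int u^2 \leq \frac{K^{p-1}E_0^2}{t^{p-1}}.
\end{align*}
Integrating from $0$ to $t$, and using that $p - 1 < 1$ so $t^{1-p}$ is integrable at zero, yields
\begin{align*}
    \int u^p(t) \leq \int u^p(0) + \frac{pCK^{p-1}E_0^2}{2-p}\, t^{2-p}.
\end{align*}
This is (\ref{Lpcontrolbefore:bound}) with $C_{\ref{lemma:Lpcontrolbefore}} = \frac{pCK^{p-1}E_0^2}{2-p}$, which depends only on $p, E_0, K$.

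The only delicate point is justifying the integration by parts on the noncompact $M$, and the differentiation of $\int u^p$, given that $u^p$ is not a priori integrable for $t > 0$ and that $u^{p-1}$ is singular where $u=0$ when $p<2$. I would handle this as in Lemma \ref{lemma:energyidentity}. Work with a regularization $(u^2+\delta^2)^{p/2}-\delta^p$, which is bounded by $u^2\delta^{p-2}$, so each time slice is integrable by (\ref{Lpcontrolbefore:energybound}). Multiply by the cutoffs $\varphi_n^2$ built from the smoothed distance function. The cutoff error term $\int|\nabla\varphi_n|\,|\nabla u|\,u^{p-1}$ can be absorbed by Peter--Paul into the gradient term plus $\int|\nabla\varphi_n|^2u^p$. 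That remainder is $O(n^{-2})$ times a quantity we can control on $[\epsilon,t]$ via $u \leq K/\epsilon$. We then let $n\to\infty$ and $\delta\to 0$ using monotone and dominated convergence. If $\|F^+(0)\|_p=\infty$ there is nothing to prove. Otherwise, applying the argument on $[\epsilon,t]$ and letting $\epsilon\to0$ using continuity of the classical solution at $t=0$ gives the claim. This limiting step is the main obstacle; the estimate itself is straightforward.
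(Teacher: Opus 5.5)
Your proposal is correct and is exactly the paper's proof. Like the paper, you drop the gradient term in (\ref{Lpcontrolafter:firststepofMoser}), bound $\int u^{p+1} \leq \|u\|_\infty^{p-1}\|u\|_2^2 \leq (K/t)^{p-1}E_0^2$, and integrate in time, using that $t^{1-p}$ is integrable at $0$.

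The paper does not justify the computation on noncompact $M$ at all, so your regularization is an addition. One small correction to it: the cutoff remainder is controlled by the energy bound, e.g.\ via $(u^2+\delta^2)^{\frac{p}{2}-1}u^2 \leq \delta^{p-2}u^2$, not by $u \leq K/\epsilon$. You can therefore take $\epsilon \to 0$ at fixed $n$ and $\delta$, before removing the cutoff, where local continuity at $t=0$ suffices.
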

\begin{proof}
    By (\ref{supbound:uniformbound}) and (\ref{Lpcontrolafter:firststepofMoser}),
    \begin{align*}
        \frac{d}{dt}\int u^p \leq pC\|u\|_\infty^{p-1} \|u\|_2^2 \leq pC\left(\frac{K}{t}\right)^{p-1}E_0^2.
    \end{align*} The desired estimate follows by integrating in time.
\end{proof}
Next we show that if the space-time $L^2$ norm of $D^\ast F$ is small, then $F^+$ is scale-invariantly small in $L^\infty,$ not just bounded.
\begin{lemma}\label{lemma:tensioncontrolssup}
     Let $\varepsilon, t > 0.$ Suppose $A(t)$ satisfies the hypotheses of Lemma \ref{lemma:Lpcontrolbefore}. There exist $\Omega \geq 1,$ depending only on $\varepsilon$ and $K,$ and $\delta > 0,$ depending only on $\varepsilon, E_0,$ and $K,$ such that the following holds. If
     \begin{align*}
         \int_{t/2}^{\Omega t} \int |D^\ast F|^2 \leq \delta^2,
     \end{align*} then
     \begin{align*}
         \|F^+(t)\|_\infty \leq \frac{\varepsilon}{t}.
     \end{align*}
\end{lemma}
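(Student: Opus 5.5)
By parabolic rescaling, as in the proof of Lemma \ref{lemma:epsreg}, it suffices to treat $t = 1$. The hypotheses \eqref{Lpcontrolbefore:energybound}, \eqref{Lpcontrolbefore:supbound} and the space-time $L^2$ norm of $D^\ast F$ are all scale-invariant in dimension four, so $E_0, K, \delta$ are unchanged. The idea is that small tension on $[\frac12, \Omega]$ makes the flow nearly static in local $L^2$. The self-dual energy near any point $x$ at times $s \in [\frac12, 1]$ is then close to the energy near $x$ at time $\Omega$. By \eqref{Lpcontrolbefore:supbound} the latter is at most $C K^2/\Omega^2$ on a unit ball. Once this local energy is below $\eps_0^2$, Lemma \ref{lemma:epsreg} converts it into a pointwise bound at $(x,1)$.

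\textbf{Transferring energy from time $\Omega$.} Fix $x \in M$ and take $\varphi$ to be a Lipschitz cutoff equal to $1$ on $B_{1/2}(x)$, supported in $B_1(x)$, with $|\nabla \varphi| \le C$. For $s \in [\frac12, 1]$, apply Lemma \ref{lemma:energyineq} with $a = s$, $b = \Omega$. Since the left-hand side there carries an absolute value, the inequality bounds the energy at the earlier time $s$ by that at the later time $\Omega$:
\begin{align*}
\int_{B_{1/2}(x)} |F^+(s)|^2 \le \int_{B_1(x)} |F^+(\Omega)|^2 + \int_s^\Omega\!\!\int \varphi^2 |D^\ast F|^2 + 4\Big(\int_s^\Omega\!\!\int \varphi^2|D^\ast F|^2\Big)^{\frac12}\Big(\int_s^\Omega\!\!\int |\nabla\varphi|^2|F^+|^2\Big)^{\frac12}.
\end{align*}
The terms are bounded as follows.
\begin{itemize}
\item By \eqref{Lpcontrolbefore:supbound} and Euclidean volume growth $\mathrm{Vol}(B_1) \le C$, the first term is at most $C K^2 \Omega^{-2}$.
\item The second term is at most $\delta^2$ by hypothesis.
\item In the cross term, the last factor is at most $C\,\Omega\, E_0^2$ by \eqref{Lpcontrolbefore:energybound}. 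One could instead use $|F^+| \le K/t$ on the annulus to get a bound $C K^2$ independent of $\Omega$, but this is unnecessary.
\end{itemize}
Hence, for all $s \in [\frac12, 1]$,
\begin{align*}
\sup_{\frac12 \le s \le 1}\int_{B_{1/2}(x)} |F^+(s)|^2 \le C K^2 \Omega^{-2} + \delta^2 + C\delta\, \Omega^{\frac12} E_0 =: \eta.
\end{align*}

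\textbf{Applying $\eps$-regularity.} Apply Lemma \ref{lemma:epsreg} with exponent $2$, radius $R = \frac12$ and $T = 1$; the time interval $[T - R^2, T] = [\frac34, 1]$ lies in $[\frac12, 1]$. Provided $\eta \le \eps_0^2$, we obtain $|F^+(x,1)| \le C \eta^{1/2}$. Since $x$ is arbitrary, this gives $\|F^+(1)\|_\infty \le C\eta^{1/2}$.

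\textbf{Choice of constants.} First choose $\Omega \ge 1$, depending only on $\eps$ and $K$ (and the universal constants, which depend on $|\Gamma|$), so that $C K^2 \Omega^{-2} \le \min\{\eps^2/(2C^2),\, \eps_0^2/2\}$. Then choose $\delta$, depending additionally on $E_0$, so that $\delta^2 + C\delta\,\Omega^{1/2}E_0$ satisfies the same bound. This gives $\eta \le \eps_0^2$ and $C\eta^{1/2} \le \eps$, and undoing the rescaling yields $\|F^+(t)\|_\infty \le \eps/t$.

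\textbf{Main obstacle.} The delicate step is the backward-in-time use of the local energy inequality. Because of the absolute value in \eqref{energyineq:ineq}, the estimate is two-sided, so later-time smallness of the local energy does transfer to earlier times with an error controlled by the tension. The price is the cross term, which grows with $\Omega$. This forces the order of choices: $\Omega$ must be fixed first from $\eps$ and $K$, and only then $\delta$, which is why $\delta$ depends on $E_0$ while $\Omega$ does not.
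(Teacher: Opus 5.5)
Your proposal is correct and follows the paper's argument: apply the two-sided local energy inequality backward from time $\Omega t$, where the $K/t$ bound makes the local self-dual energy small; control the error by $\delta^2$ and the $C\delta E_0\sqrt{\Omega}$ cross term; then apply $\varepsilon$-regularity, fixing $\Omega$ before $\delta$. Your normalization to $t=1$, with the tension integral correctly taken over $[s,\Omega]$ and the choice $R=\frac12$, $T=1$ in Lemma \ref{lemma:epsreg}, handles the time-interval bookkeeping more carefully than the paper's write-up.
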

\begin{proof}
    We may assume $\varepsilon <\varepsilon_0.$ Let $q \in M.$ Then for $\Omega$ large enough depending on $\varepsilon$ and $K,$ (\ref{Lpcontrolbefore:supbound}) implies
    \begin{align*}
        \int_{B_{2\sqrt{t}}(q)} |F^+(\Omega t)|^2 \leq \frac{K^2}{(\Omega t)^2}C\left(2\sqrt{t}\right)^4 \leq \frac{\varepsilon^2}{4C^2}.
    \end{align*} Let $\varphi$ be the cutoff function
    \begin{align*}
        \varphi(x) :=
        \begin{cases}
            1 & \text{dist}(x, q) \leq  \sqrt{t} \\
            \frac{2\sqrt{t}-\text{dist}(x, q)}{\sqrt{t}} & \sqrt{t} \leq \text{dist}(x, q) \leq 2\sqrt{t} \\
            0 & 2\sqrt{t} \leq \text{dist}(x, q).
        \end{cases}
    \end{align*}Then by (\ref{energyineq:ineq}) with this choice of $\varphi,$ we have for $\tau \in [t/2, \Omega t]$
    \begin{align*}
         \int_{B_{\sqrt{t}}(q)} |F^+(\tau)|^2 &\leq \int_{B_{2\sqrt{t}}(q)} |F^+(\Omega t)|^2 + \int_{t/2}^{\tau}\hspace{-0.2em}\int |D^\ast F|^2 + 4\left(\int_{t/2}^{\tau} \hspace{-0.2em} \int |D^\ast F|^2\right)^{\frac{1}{2}}\left(\int_{t/2}^{\tau} \hspace{-0.2em} \int |\nabla \varphi|^2 |F^+|^2\right)^{\frac{1}{2}} \\
         &\leq \frac{\varepsilon^2}{4C^2} + \delta^2 + C\delta E_0\sqrt{\Omega} \\
         &\leq (\varepsilon/C)^2,
    \end{align*} provided $\delta > 0$ is small enough depending on $\varepsilon, E_0,$ and $K.$ Since $q \in M$ was arbitrary, epsilon-regularity yields
    \begin{align*}
        \|F^+(t)\|_\infty \leq \frac{\varepsilon}{t},
    \end{align*} as desired.
\end{proof}

We now establish a uniform time after which $\|F^+(t)\|_2$ is less than $\varepsilon_p.$

\begin{lemma}\label{lemma:uniformtime}
    Suppose $A(t)$ is a solution of (\ref{ymf}) satisfying the hypotheses of Lemma \ref{lemma:Lpcontrolbefore}. There exists $t_0 > 0,$ depending only on $p, E_0, K, |\Gamma|,$ and $\|F^+(0)\|_{p},$ such that $\| F^+(t_0)\|_2 \leq \eps_p.$
\end{lemma}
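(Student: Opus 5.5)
The idea is to combine the interpolation $\|F^+\|_2^2 \leq \|F^+\|_p^p \|F^+\|_\infty^{2-p}$ with the two preceding lemmas. Lemma \ref{lemma:Lpcontrolbefore} keeps $\|F^+(t)\|_p^p$ at most polynomially large, namely $\|F^+(0)\|_p^p + C t^{2-p}$. Lemma \ref{lemma:tensioncontrolssup} makes $t\|F^+(t)\|_\infty$ as small as we like at any time $t$ where the tension energy on $[t/2,\Omega t]$ is small. At such a time,
\begin{align*}
\|F^+(t)\|_2^2 \leq \left(\|F^+(0)\|_p^p + C_{\ref{lemma:Lpcontrolbefore}} t^{2-p}\right)\left(\frac{\varepsilon}{t}\right)^{2-p} \leq \|F^+(0)\|_p^p\, \varepsilon^{2-p} t^{p-2} + C_{\ref{lemma:Lpcontrolbefore}}\varepsilon^{2-p}.
\end{align*}
Because the Lemma \ref{lemma:Lpcontrolbefore} growth $t^{2-p}$ exactly cancels the decay $t^{-(2-p)}$, the second term is scale-invariantly small once $\varepsilon$ is small. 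The first term is small once $t$ is bounded below by a constant depending on $\|F^+(0)\|_p$.

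So I would fix $\varepsilon>0$ with $C_{\ref{lemma:Lpcontrolbefore}}\varepsilon^{2-p} \leq \varepsilon_p^2/2$; this depends only on $p, E_0, K$. Lemma \ref{lemma:tensioncontrolssup} then gives $\Omega$ and $\delta$ depending only on $\varepsilon, E_0, K$. Next I pick $T_1 \geq 1$, depending on $\|F^+(0)\|_p$ as well, with $\|F^+(0)\|_p^p\,\varepsilon^{2-p}T_1^{p-2} \leq \varepsilon_p^2/2$.

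The remaining step is a pigeonhole argument giving a time $t \geq T_1$ at which $\int_{t/2}^{\Omega t}\int|D^\ast F|^2 \leq \delta^2$. I would use the global energy identity (\ref{energyidentity:identity}) and the bound (\ref{Lpcontrolbefore:energybound}), which give $\int_0^\infty\int_M|D^\ast F|^2 \leq E_0^2$; if one prefers to apply the identity at time $0$ it is $\|F^+(0)\|_2^2 \leq E_0^2$. Consider the geometric times $t_k := (2\Omega)^k T_1$ for $k = 0,1,\dots,N-1$. The intervals $[t_k/2, \Omega t_k]$ are pairwise disjoint, since $\Omega t_k = t_{k+1}/2$. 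Hence if $N > E_0^2/\delta^2$, some $k$ satisfies $\int_{t_k/2}^{\Omega t_k}\int|D^\ast F|^2 \leq \delta^2$. Setting $t_0 := t_k$, we have $t_0 \geq T_1$, and the display above gives $\|F^+(t_0)\|_2 \leq \varepsilon_p$.

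The only subtlety is the requirement that $t_0$ depend only on the listed quantities. The argument above produces some $t_0 \leq (2\Omega)^{N}T_1$, an a priori bounded time, but its exact location depends on the solution. If a literal uniform time is needed, I would add Lemma \ref{lemma:Lpcontrolafter}: once $\|F^+\|_2 \leq \varepsilon_p$ is reached, $L^p$ is monotone thereafter. Alternatively, I would rerun the argument on $[t_k, \infty)$ with $\varepsilon_p$ replaced by a smaller threshold to propagate smallness of $\|F^+\|_2$. I expect this dependency bookkeeping, rather than the analysis, to be the main point requiring care. Everything else is a direct combination of Lemmas \ref{lemma:Lpcontrolbefore} and \ref{lemma:tensioncontrolssup} with pigeonhole.
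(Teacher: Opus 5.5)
Your proposal is correct and is essentially the paper's proof: the same interpolation $\|F^+\|_2^2 \leq \|F^+\|_p^p\|F^+\|_\infty^{2-p}$, Lemmas \ref{lemma:Lpcontrolbefore} and \ref{lemma:tensioncontrolssup}, and pigeonhole over the geometric times $(2\Omega)^i$. The one difference is bookkeeping: the paper starts at $t=1$ and absorbs $\|F^+(0)\|_p$ into the choice of $\varepsilon$, using $\|F^+(0)\|_p^p + C_{\ref{lemma:Lpcontrolbefore}}t_0^{2-p} \leq (\|F^+(0)\|_p^p + C_{\ref{lemma:Lpcontrolbefore}})t_0^{2-p}$ for $t_0 \geq 1$, whereas you put that dependence into the starting time $T_1$.

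On your dependency concern, the paper also produces only a solution-dependent $t_0$ with an a priori upper bound, and uses only that bound (cf.\ the proof of Theorem \ref{thm:Lpbounded}). A literal uniform time follows at once because $\|F^+(t)\|_2$ is nonincreasing by (\ref{energyidentity:identity}). Hence $t = (2\Omega)^{N-1}T_1$ works, and you need neither Lemma \ref{lemma:Lpcontrolafter} nor a rerun of the argument.
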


\begin{proof}
    Take $\Omega, \delta$ from Lemma \ref{lemma:tensioncontrolssup} such that the lemma holds with
    \begin{align*}
        \varepsilon = \min\left\{\left(\frac{\varepsilon_p}{\sqrt{\|u(0)\|_p^p + C_{\ref{lemma:Lpcontrolbefore}}}}\right)^{\frac{2}{2-p}}, 1\right\}
    \end{align*} By (\ref{energyidentity:identity}) and (\ref{Lpcontrolbefore:energybound}),
    \begin{align*}
        \int_0^\infty\int_M |D^\ast F|^2 \leq E_0^2,
    \end{align*} so there exists an integer
    \begin{align*}
        i \in [0, E_0^2\delta^{-2} + 1]
    \end{align*} such that
    \begin{align*}
        \int_{(2\Omega)^i/2}^{\Omega(2\Omega)^i} \int_M |D^\ast F|^2 \leq \delta^2.
    \end{align*} Denoting
    \begin{align*}
        t_0 := (2\Omega)^i \geq 1,
    \end{align*} we have by Lemma \ref{lemma:Lpcontrolbefore} that
    \begin{align*}
        \|u(t_0)\|_2 &\leq \|u(t_0)\|_p^{\frac{p}{2}}\|u(t_0)\|_\infty^{1-\frac{p}{2}} \\
        &\leq \left(\|u(0)\|_p^p + C_{\ref{lemma:Lpcontrolbefore}}t_0^{2-p}\right)^{\frac{1}{2}}\left(\frac{\varepsilon}{t_0}\right)^{1-\frac{p}{2}} \\
        &\leq \varepsilon_p.
    \end{align*} The desired claim now follows from Lemma \ref{lemma:Lpcontrolafter}.
\end{proof}

Finally, we have the following uniform estimate on the $L^p$ norm.

\begin{thm} \label{thm:Lpbounded}
    Let $E_0, K \geq 0$ and $p \in (1, 2).$ Suppose $A(t)$ is a solution of (\ref{ymf}) such that for $t \geq 0$
    \begin{align}
        \|F^+(t)\|_2 &\leq E_0, \label{Lpbounded:energybound} \\
        \|F^+(t)\|_\infty &\leq \frac{K}{t}, \label{Lpbounded:supbound}
    \end{align} and $F^+(0) \in L^p.$ There exists $C_{\ref{thm:Lpbounded}} > 0,$ depending only on $p, E_0, K, |\Gamma|,$ and $\|F^+(0)\|_{p},$ such that
    \begin{align*}
       \|F^+(t)\|_p + \left( t + t^{\frac{2}{p}} \right) \|F^+\|_{\infty} \leq C_{\ref{thm:Lpbounded}}
    \end{align*}
    for all $0 \leq t < \infty.$
    Moreover, the quantity on the LHS tends to zero as $t \to \infty.$
\end{thm}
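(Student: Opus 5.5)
The plan is to split the time axis at the uniform time $t_0$ supplied by Lemma \ref{lemma:uniformtime}. Under the hypotheses (\ref{Lpbounded:energybound}), (\ref{Lpbounded:supbound}) and $F^+(0) \in L^p$, that lemma gives $t_0 > 0$, depending only on $p, E_0, K, |\Gamma|,$ and $\|F^+(0)\|_p$, with $\|F^+(t_0)\|_2 \leq \eps_p$. Inspecting its proof, $t_0 = (2\Omega)^i$ with $i \leq E_0^2\delta^{-2}+1$, so $t_0$ is also bounded above by a constant $T_*$ depending only on the same data. This upper bound is what makes all later constants uniform.

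On $[0, t_0]$ we argue directly. Lemma \ref{lemma:Lpcontrolbefore} gives
$$\|F^+(t)\|_p^p \leq \|F^+(0)\|_p^p + C_{\ref{lemma:Lpcontrolbefore}} T_*^{2-p}.$$
Next, $t\|F^+(t)\|_\infty \leq K$ by (\ref{Lpbounded:supbound}), and
$$t^{\frac2p}\|F^+(t)\|_\infty \leq K t^{\frac2p-1} \leq K T_*^{\frac2p - 1}$$
since $\frac{2}{p} > 1$.

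For $t \geq t_0$, Lemma \ref{lemma:Lpcontrolafter} applies, giving $\|F^+(t)\|_p \leq \|F^+(t_0)\|_p$, which is already bounded uniformly by the previous step. The term $t\|F^+\|_\infty \leq K$ holds for all $t$. For $t \geq 2t_0$, Lemma \ref{lemma:improveddecay}, with this $K$ and this $t_0$, gives
$$t^{\frac2p}\|F^+(t)\|_\infty \leq C_{\ref{lemma:improveddecay}} \|F^+(t_0)\|_p,$$
which is uniformly bounded. On the remaining window $[t_0, 2t_0]$ we use $t^{\frac2p}\|F^+\|_\infty \leq K(2T_*)^{\frac2p-1}$. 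Summing the three terms gives $C_{\ref{thm:Lpbounded}}$.

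For the limit, the \emph{moreover} clause of Lemma \ref{lemma:improveddecay} gives $\|F^+(t)\|_p + t^{\frac2p}\|F^+(t)\|_\infty \to 0$. Then
$$t\|F^+(t)\|_\infty \leq t^{1-\frac2p}\, t^{\frac2p}\|F^+(t)\|_\infty \to 0$$
as well, since $1 - \frac{2}{p} < 0$ for large $t$.

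The step I expect to need the most care is the uniformity of $t_0$: Lemma \ref{lemma:uniformtime} as stated only asserts existence, so I would have to extract the upper bound $T_*$ from its pigeonhole construction. The dependence of $\Omega$ and $\delta$ on $\eps$, and of $\eps$ on $\|F^+(0)\|_p$, must be checked to depend only on the listed quantities.
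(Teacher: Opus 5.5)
Your proposal is correct and follows the paper's proof essentially verbatim. The paper likewise extracts from the pigeonhole argument in Lemmas \ref{lemma:tensioncontrolssup} and \ref{lemma:uniformtime} an explicit upper bound $t_1$ on $t_0$ (your $T_*$), then combines Lemma \ref{lemma:Lpcontrolbefore} up to $t_0$, Lemma \ref{lemma:Lpcontrolafter} afterwards, Lemma \ref{lemma:improveddecay} for large $t$, and the bound $K/t$ on the intermediate window. The differences are cosmetic: you switch to Lemma \ref{lemma:improveddecay} at $2t_0$ rather than at $2t_1$, and you spell out the step $t\|F^+(t)\|_\infty \leq t^{1-\frac{2}{p}}\, t^{\frac{2}{p}}\|F^+(t)\|_\infty \to 0$, which the paper leaves implicit.
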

\begin{proof}

    It follows from the proofs of Lemmas \ref{lemma:tensioncontrolssup} and \ref{lemma:uniformtime} that we may take $t_0$ in Lemma \ref{lemma:Lpcontrolafter} to be less than
    \begin{align*}
        t_1 := \left(C\left(K\varepsilon^{-1}+1\right)\right)^{CE_0^2K\varepsilon^{-6}+1},
    \end{align*} where
    \begin{align*}
        \varepsilon = \min\left\{\left(\frac{\varepsilon_p}{\sqrt{\|F^+(0)\|_p^p + C_{\ref{lemma:Lpcontrolbefore}}}}\right)^{\frac{2}{2-p}}, 1\right\}.
    \end{align*} Thus by Lemmas \ref{lemma:Lpcontrolbefore} and \ref{lemma:Lpcontrolafter}, we have
    \begin{align*}
        \|F^+(t)\|_p^p \leq \|F^+(0)\|_p^p + C_{\ref{lemma:Lpcontrolbefore}} (\min\{t, t_1\})^{2-p}
    \end{align*} for $t \geq 0.$ Then for $t \geq 2t_1,$ Lemma \ref{lemma:improveddecay} yields
    \begin{align*}
        t^{\frac{2}{p}}\|F^+(t)\|_\infty \leq C_{\ref{lemma:improveddecay}},
    \end{align*} while for $t \in (0, 2t_1)$ we may use the bound (\ref{Lpbounded:supbound}). Thus the uniform estimate on $\|F^+(t)\|_p + \left(t + t^{\frac{2}{p}}\right)\|F^+(t)\|_\infty$ is established. The decay to zero follows from Lemma \ref{lemma:improveddecay}.
\end{proof}

\begin{proof}[Proof of Theorem \ref{thm:main}$b$]
    By Theorem \ref{thm:supbound}, we may take $E_0^2 = \frac{8\pi^2}{|\Gamma|}$ and $K = C_{\ref{thm:supbound}}$ in the hypotheses (\ref{Lpbounded:energybound}-\ref{Lpbounded:supbound}). The desired conclusions now follow \emph{a fortiori} from the preceding theorem.
\end{proof}

\vspace{10mm}

\section{Convergence of $A(t)$}
Finally, we record the following convergence properties of $A(t)$ under the assumptions of Theorem \ref{thm:supbound} and the preceding section. 
\begin{comment}
We remark that the assumption $F^+(0) \in L^p$ for some $p \in (1, 2)$ likely cannot be dropped in general, based on work of Gustafson, Nakanishi, and Tsai \cite{gustafsonnakanishitsai}.
\end{comment}
\begin{thm}\label{thm:convergence}
    Suppose $A(t)$ is a solution of (\ref{ymf}) satisfying
    \begin{align*}
        \|F(0)\|_2 + \|F^+(0)\|_p &< \infty, \\
        \|F^+(0)\|_2^2 &< \frac{8\pi^2}{|\Gamma|}.
    \end{align*} Then $A(t)$ converges in $C_{\text{loc}}^\infty$ to an ASD instanton $A_\infty$ as $t \to \infty.$ Moreover, $A(t)$ converges to $A_\infty$ in $L^q$ for all $q \in (\frac{2p}{2-p}, \infty],$ and $F(t)$ converges to $F_{A_\infty} = F_{A_\infty}^-$ in $L^2.$
\end{thm}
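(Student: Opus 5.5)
The plan is to integrate the flow equation in time, show that $\partial_t A = -D^*F$ is integrable in $t$ with values in the relevant norms, and then read off the properties of the limit from the decay estimates already proved. The first step is algebraic. By the Bianchi identity, $D^*F^- = \pm D^*F^+$ (it is $\ast D\ast$ applied to the anti-self-dual part), so $D^*F = 2D^*F^+$ up to sign. Hence everything reduces to estimating $\nabla_A F^+$, and only $F^+$ has been shown to decay. By Theorem \ref{thm:main}(a), Lemma \ref{lemma:energygoestozero} and Theorem \ref{thm:Lpbounded}, there is a time $T_0$ after which $F^+$ is scale-invariantly small on every parabolic cylinder $B_{\sqrt t}(x)\times[t/2,t]$. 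Moreover, $t^{2/p}\|F^+(t)\|_\infty \to 0$, $\|F^+(t)\|_p\to 0$ and $\|F^+(t)\|_2 \to 0$.

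The second step, which I expect to be the main obstacle, is a local smoothing estimate for $F^+$ alone. I want a bound of the form
\begin{align*}
\|\nabla_A F^+(t)\|_{L^\infty} \leq C t^{-\frac12}\sup_{t/2\le s\le t}\|F^+(s)\|_{L^\infty},
\qquad
\|\nabla_A F^+(t)\|_{L^r} \leq C t^{-\frac12}\sup_{t/2\le s\le t}\|F^+(s)\|_{L^r},
\end{align*}
with $C$ depending only on $|\Gamma|$. The difficulty is that the Bochner formula for $|\nabla_A F^+|^2$ contains commutator terms $[F,\nabla_A F^+]$ involving $F^-$, which is not known to be small. I would first try to bound $F^-$ by rescaling to unit scale around time $t$. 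There, $\|F\|_{L^2}$ is bounded and $F^+$ is small, and the $\eps$-regularity of Lemma \ref{lemma:epsreg} together with Uhlenbeck-type local arguments (as in \cite{gappaper}) should give a scale-invariant bound $|F|\le C/t$ away from finitely many concentration points. Alternatively, I would try to absorb the $F^-$ term using the energy identity (\ref{energyidentity:identity}) on dyadic time intervals. If this fails, I would instead use only the time-integrated quantity $\int_T^{2T}\!\int|D^*F|^2 \le \|F^+(T)\|_2^2$ and Cauchy--Schwarz in time. Combining either version with the interpolation
\begin{align*}
\|F^+(t)\|_{r}\le \|F^+(t)\|_p^{p/r}\|F^+(t)\|_\infty^{1-p/r} = o\bigl(t^{-\frac2p(1-\frac pr)}\bigr)
\end{align*}
should yield $\int_{T_0}^\infty \|D^*F(t)\|_{L^q}\,dt<\infty$ in the stated range of $q$, and in particular for $q=\infty$. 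The exponent bookkeeping here is the place where the threshold $q>\frac{2p}{2-p}$ must come out. I would expect to combine the $L^\infty$ decay $o(t^{-\frac12-\frac2p})$ with a space--time $L^2$ bound on $D^*F$ over dyadic intervals and interpolate.

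Given integrability, $A(t)=A(T_0)-\int_{T_0}^t D^*F$ is Cauchy in $L^q$, so it converges to some $A_\infty$. Local smoothness comes from the same bootstrapped higher-derivative estimates $\|\nabla^k_A F^+(t)\|_\infty\le C_k t^{-k/2}\|F^+\|_\infty$, which integrate in time, plus the uniform $L^\infty$ bound on $A(t)-A(T_0)$. These give $C^\infty_{loc}$ convergence. Since $F^+(t)\to0$ locally uniformly, $F^+_{A_\infty}=0$.

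For the $L^2$ convergence of curvature, charge conservation (\ref{energyidentity:chernweil}) gives
\begin{align*}
\|F^-(t)\|_2^2=\|F^+(t)\|_2^2+8\pi^2\kappa(A_0)\to 8\pi^2\kappa(A_0).
\end{align*}
To rule out loss of $F^-$ energy at spatial infinity, I would run the tail argument of Lemma \ref{lemma:energygoestozero} with $F^-$ in place of $F^+$, using Lemma \ref{lemma:energyineq} for $F^-$ and the finiteness of $\int_0^\infty\!\int|D^*F|^2$. This gives uniform smallness of $\int_{M\setminus B_\rho}|F^-(t)|^2$. Combined with local smooth convergence, $F(t)\to F_{A_\infty}$ in $L^2$. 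It follows that $\kappa(A_\infty)=\kappa(A_0)$, which also yields part (c) of Theorem \ref{thm:main}.
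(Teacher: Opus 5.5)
There is a genuine gap at exactly the step you flag as the main obstacle. Nothing in your plan controls $F^-$ uniformly in time, and without that none of the conclusions follow: not $q=\infty$, not $C^\infty_{loc}$, and not the $L^q$ bookkeeping.

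Your first approach cannot work as stated. Suppose $A_\infty$ is not flat, which is the typical case, e.g.\ whenever $\kappa(A_0)\neq 0$. After rescaling the metric to unit scale at time $t$, the limiting instanton has curvature of size $t|F_{A_\infty}|$ on a ball of radius $\sim t^{-1/2}$, so it is precisely one of your ``finitely many concentration points.'' A scale-$\sqrt t$ argument gives no information there, and neither does Lemma \ref{lemma:epsreg}, which only concerns $F^+$. Near such points your gradient bound $\|\nabla_A F^+(t)\|_\infty\le Ct^{-1/2}\sup\|F^+\|_\infty$ is unsupported, since its Bochner formula involves the full $F$. Your fallbacks based only on $\int\!\!\int|D^*F|^2$ give no pointwise control, so they cannot yield the $q=\infty$ or $C^\infty_{loc}$ statements.

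The missing idea is integrability in time. Theorem \ref{thm:Lpbounded} gives $\|F^+(t)\|_\infty=O(t^{-2/p})$ with $2/p>1$, hence $\|F^+\|_\infty\in L^1([1,\infty))$. This rules out concentration of $F^-$: by \cite[Theorem 2.5]{instantons}, $\sup_{t\ge 2}\|F(t)\|_\infty<\infty$. With the full curvature bounded, one works at \emph{unit} scale, not scale $\sqrt t$:
\begin{itemize}
\item Lemma \ref{lemma:energyineq} and Lemma \ref{lemma:improveddecay} give $\int_{t-1}^t\int_{B_1(x)}|D^*F|^2=O(t^{-4/p})$ uniformly in $x$.
\item Hence $\|\nabla^kD^*F(t)\|_\infty=O(t^{-2/p})$ and $\|\nabla^kF(t)\|_\infty$ is bounded.
\item This is integrable in $t$, which yields $C^\infty_{loc}$ and $L^\infty$ convergence via \cite[Prop.~3.3]{instantons}.
\end{itemize}
The threshold comes from interpolating this rate with $\int_n^{n+1}\|D^*F\|_2^2\le\|F^+(n)\|_2^2\lesssim n^{2-4/p}$. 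That gives a bound by $\sum_n n^{2/q-2/p}$, which is finite iff $q>\frac{2p}{2-p}$. It does not come from a $t^{-1/2}$ gain.

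Your $L^2$-convergence step has a similar problem. In Lemma \ref{lemma:energygoestozero}, the cross term $\int_0^T\int_U|\nabla\varphi|^2|F^+|^2$ is bounded uniformly in $T$ only because $\|F^+(t)\|_\infty\le C/t$. With $F^-$ in its place you can only bound it by $T\rho^{-2}\sup_t\|F^-(t)\|_2^2$, which grows linearly in $T$. So finiteness of $\int_0^\infty\int|D^*F|^2$ does not give uniform-in-time smallness of $\int_{M\setminus B_\rho}|F^-(t)|^2$. The paper again uses $\|F^+\|_\infty\in L^1([1,\infty))$ to stop $F^-$-energy escaping to infinity. It does so through an exterior version of \cite[Theorem 2.5]{instantons}, with cutoff $1-\beta_{N,NR}$ in place of $\beta_{N,R}$, which is valid because the ALE end is asymptotically Euclidean. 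Once these two inputs are in place, your charge-conservation identity and the rest of the outline go through.
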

\begin{proof}
    Since
    \begin{equation}\label{F+L1}
    \|F^+(t)\|_\infty \in L^1([1, \infty)),
    \end{equation}
    it follows from \cite[Theorem 2.5]{instantons} that
    \begin{align}\label{convergence:Fbound}
        \sup_{t \in [2, \infty)} \|F(t)\|_\infty < \infty.
    \end{align} Moreover, by (\ref{energyineq:ineq}) and Lemma \ref{lemma:improveddecay}, we have for all $x \in M$ that
    \begin{align*}
        \int_{t-1}^t\int_{B_1(x)} |D^\ast F|^2 &\leq \int_{B_2(x)} |F^+(t-1)|^2 + \int_{B_2(x)} |F^+(t)|^2 + C\int_{t-1}^t\int_{B_2(x)} |F^+|^2 \\
        &= O\left(t^{-\frac{4}{p}}\right)
    \end{align*} as $t \to \infty.$ In view of the $L^\infty$ bound on the full curvature (\ref{convergence:Fbound}), we have for each integer $k \geq 0$ and for large $t$ the derivative estimates of \cite[Prop. 3.2]{instantons}
    \begin{align*}
        &\|\nabla^k D^\ast F(t)\|_\infty \lesssim \sup_{x \in M}\|D^\ast F\|_{L^2(B_1(x) \times [t-1, t])} = O\left(t^{-\frac{2}{p}}\right), \\
        &\|\nabla^k F(t)\|_\infty < \infty.
    \end{align*} Thus by \cite[Prop. 3.3]{instantons}, $A(t)$ converges in $C_{\text{loc}}^\infty$ to an ASD instanton $A_\infty.$ \par
    Next we show global convergence of $A(t)$ to $A_\infty$ in $L^q.$ It suffices to show that
    \begin{align*}
        \int_N^\infty \|D^\ast F\|_q < \infty.
    \end{align*}By H{\"o}lder's inequality and the first part of the proof,
    \begin{align*}
        \int_N^\infty \|D^\ast F\|_q &\leq \int_N^\infty \|D^\ast F\|_2^{\frac{2}{q}}\|D^\ast F\|_\infty^{1-\frac{2}{q}} \\
        &\lesssim \sum_{n = N}^\infty n^{-\frac{2}{p}(1-\frac{2}{q})}\int_n^{n+1} \|D^\ast F\|_2^{\frac{2}{q}} \\
        &\lesssim \sum_{n = N}^\infty n^{-\frac{2}{p}(1-\frac{2}{q})}\left(\int_n^{n+1} \|D^\ast F\|_2^{2}\right)^{\frac{1}{q}}
    \end{align*} By (\ref{energyineq:ineq}), Lemma \ref{lemma:Lpcontrolafter}, and Lemma \ref{lemma:improveddecay},
    \begin{align*}
        \int_n^{n+1} \|D^\ast F\|^2 &\leq \|F^+(n)\|^2 \\
        &\leq (\|F^+(n)\|_p^{\frac{p}{2}}\|F^+(n)\|_\infty^{1-\frac{p}{2}})^2 \\
        &\lesssim \|F^+(n)\|_\infty^{2-p} \\
        &\lesssim n^{2-\frac{4}{p}}.
    \end{align*} Therefore
    \begin{align*}
        \int_N^\infty \|D^\ast F\|_q \lesssim \sum_{n = N}^\infty n^{\frac{2}{q}-\frac{2}{p}},
    \end{align*} which is finite if $q > \frac{2p}{2-p},$ as desired. \par
    Finally we note that $F(t)$ converges in $L^2$ to $F_{A_\infty}.$ Fix $x \in M.$ Since
    \begin{align*}
        \int_{M \setminus B_R(x)} |F(t_2) - F(t_1)|^2 \leq 2 \int_{M \setminus B_R(x)} |F(t_2)|^2 + |F(t_1)|^2,
    \end{align*} and since $F(t)$ converges to $F_{A_\infty}$ in $C_{\text{loc}}^\infty$ by the first part of the proof, it suffices to show that for any $\varepsilon > 0,$ there exists $R, T > 0,$ such that for $t \geq T$
    \begin{align*}
        \int_{M \setminus B_R(x)} |F(t)|^2 \leq \varepsilon.
    \end{align*} This follows from (\ref{F+L1}) and \cite[Theorem 2.5]{instantons}, where in the statement we replace $B_{\frac{R}{N}}(x)$ with $M \setminus B_{NR}(x)$ and $B_R(x)$ with $M \setminus B_R(x),$ and in the proof we replace the special cutoff function $\beta_{N, R}$ with $1 - \beta_{N, NR}.$ Since the metric on $M$ approaches Euclidean on the end sufficiently fast, the equivalent of \cite[Lemma 2.4]{instantons} holds for $1 - \beta_{N, NR},$ so the proof of \cite[Theorem 2.5]{instantons} goes through.
\end{proof}

\begin{proof}[Proof of Theorem \ref{thm:main}$c$]
%The decay of the $L^\infty$ and $L^p$ norms of $F^+$ was established in \textsection 4, and the convergence of $A(t)$ is the content of the preceding lemma.
The convergence statement is contained in the previous theorem. By Lemma \ref{lemma:energyidentity}, we have $\kappa(A(t)) \equiv \kappa(A_0)$ for all $t < \infty.$ Since the curvature converges in $L^2$ as $t \to \infty,$ we also have $\lim_{t \to \infty} \kappa(A(t)) = \kappa(A_\infty).$ We conclude that $\kappa(A_0) = \kappa(A_\infty).$
\end{proof}

\bibliographystyle{amsinitial}
\bibliography{biblio}

\end{document}